\documentclass[reqno, 10pt]{amsart}

\usepackage{amsfonts}
\usepackage{epsfig}
\usepackage{psfrag}
\usepackage[latin1]{inputenc} 

\newtheorem{theorem}{Theorem}[section]                                          
\newtheorem{proposition}[theorem]{Proposition}                          
\newtheorem{lemma}[theorem]{Lemma}

\newtheorem{remark}{Remark}[section]

\newcommand{\nn}{\nonumber}
\newcommand{\oo}{\infty}

\newcommand{\ra}{\rightarrow}

\newcommand{\Z}{\mathbb{Z}}

\makeatletter
\@addtoreset{equation}{section}
\makeatother

\renewcommand\thefigure{\thesection.\@arabic\c@figure}
\renewcommand\thetable{\thesection.\@arabic\c@table}

\bibdata{prob}
\bibstyle{alpha} 


\title[Generalized drainage network]{Convergence to the Brownian Web for a generalization of the drainage network model}

\author{Cristian Coletti and Glauco Valle}

\date{}

\address{
\newline
UFRJ - Departamento de m\'etodos estat\'{\i}sticos do Instituto de Matem\'atica.
\newline  Caixa Postal 68530, 21945-970, Rio de Janeiro, Brasil
\newline
e-mail:  \rm \texttt{glauco.valle@im.ufrj.br} 
\newline
\newline
UFABC - Centro de Matem\'atica, Computa\c{c}\~ao e Cogni\c{c}\~ao.
\newline
Avenida dos Estados, 5001, Santo Andr\'e - S\~ao Paulo, Brasil
\newline
e-mail:  \rm \texttt{cristian.coletti@ufabc.edu.br}
}

\subjclass[2000]{primary 60K35}
\keywords{Brownian Web, Coalescing Brownian Motions, Coalescing Random Walks, Drainage Network, Invariance Principle} 
\thanks{Work supported by CNPq, FAPERJ and FAPESP grant 2009/52379-8}

\begin{document}
  
\maketitle

\begin{abstract}
We introduce a system of one-dimensional coalescing nonsimple random walks with long range jumps allowing crossing paths and exibiting dependence before coalescence. We show that under diffusive scaling this system converges in distribution to the Brownian Web.
\end{abstract}

\section{Introduction} 
\label{sec:intro}

The paper is devoted to the analysis of convergence in distribution for a diffusively rescaled system of one-dimensional coalescing random walks starting at each point in the space and time lattice $\mathbb{Z} \times \mathbb{Z}$. To not get repetitive, when we make mention to the convergence of a system of random walks, we always consider one-dimensional random walks and diffusive space time scaling. In this introduction, we aim at describing informally the system and explain why its study is relevant. 

First, the limit is the Brownian Web (BW) which is a system of coalescing Brownian motions starting at each point in the space and time plane $\mathbb{R} \times \mathbb{R}$. It is the natural scaling limit of a system of simple coalescing random walks. Here, the BW is a proper random element of a metric space whose points are compact sets of paths. This characterization of the BW was given by Fontes, Isopi, Newman, Ravishankar \cite{finr}, although formal descriptions of systems of coalescing Brownian motions had previously been considered, initially by Arratia \cite{a,a1} and then by T\'oth, Werner \cite{tw}. Following \cite{finr}, the study of convergence in distribution of systems of random walks to the BW and its variations has become an active field of research, for instance, see \cite{cfd,ffw,finr1,finr2,nrs,ss,ss1}.

In Fontes, Isopi, Newman, Ravishankar \cite{finr1} a general convergence criteria for the BW is obtained but only verified for a system of simple coalescing random walks that evolve independently up to the time of coalescence (no crossing paths). These criteria are used in Newman, Ravishankar, Sun \cite{nrs} to prove convergence to the Brownian Web of coalescing nonsimple random walks that evolve independently up to the time of coalescence (crossing paths). 

Coletti, Fontes, Dias \cite{cfd} have proved the convergenge of the BW for a system of coalescing Random Walks introduced by Gangopadhyay, Roy, Sarkar \cite{grs}. This system is called the drainage network. It evolves in the space time lattice $\mathbb{Z} \times \mathbb{Z}$ in the following way: Each site of $\mathbb{Z} \times \mathbb{Z}$ is considered open or closed according to an iid family of Bernoulli Random variables; If at time $z$ a walk is at site $x$ then at time $z+1$ its position is the open site nearest to $x$ (if we have two possible choices for the nearest open site, we choose one of them with probability $1/2$). The Drainage network is a system of coalescing nonsimple random walks with long range jumps where no crossing paths may occur. Futhermore the random walks paths are dependent before coalescence. These properties make the convergence of the drainage network to the BW a relevant and nontrivial example.

We propose here a natural generalization of the drainage network and prove the convergence to the BW. If at time $z$ a walk occupies the site $x$, then it chooses at random $k \in \mathbb{N}$ and jumps to the $k$th open site nearest to $x$, see the next section for the formal description. Now we have a system of coalescing nonsimple random walks with long range jumps and dependence before coalescence where crossing paths may occur. We suggest to the reader the discussion on \cite{nrs} about the addicional difficulties that arise in the study of convergence of systems of coalescing random walks when crossing paths may occur.

\medskip

We finish this section given a brief description of the BW which follows closely the description given in \cite{ss}, see also \cite{finr1} and the appendix in \cite{ss1}. Consider the extended plane $\bar{\mathbb{R}}^2 = [-\infty,\infty]^2$ as the completion of $\mathbb{R}^2$ under the metric
$$
\rho((x_1,t_1),(x_2,t_2)) = |\tanh(t_1) - \tanh(t_2) | \vee \Big| \frac{\tanh(x_1)}{1+|t_1|} - \frac{\tanh(x_2)}{1+|t_2|} \Big| \, ,
$$ 
and let $\bar{\rho}$ be the induced metric on $\bar{\mathbb{R}}^2$. In $(\bar{\mathbb{R}}^2,\bar{\rho})$, the lines $[-\infty,\infty] \times \{ \infty \}$ and $[-\infty,\infty] \times \{ - \infty \}$ correspond respectively to single points $(\star, \infty)$ and $(\star,-\infty)$, see picture 2 in \cite{ss1}. Denote by $\Pi$ the set of all continuous paths in $(\bar{\mathbb{R}}^2,\bar{\rho})$ of the form $\pi: t \in [\sigma_\pi,\infty] \rightarrow (f_\pi(t),t) \in (\bar{\mathbb{R}}^2,\bar{\rho})$ for some $\sigma_\pi \in [-\infty,\infty]$ and $f_\pi : [\sigma,\infty] \rightarrow [-\infty,\infty] \cup \{\star \}$. For $\pi_1$, $\pi_2 \in \Pi$, define $d(\pi_1,\pi_2)$ by  
$$
|\tanh(\sigma_{\pi_1}) - \tanh(\sigma_{\pi_2}) | \vee \sup_{t \ge \sigma_{\pi_1} \wedge \sigma_{\pi_2}} 
\Big| \frac{\tanh(f_{\pi_1}(t \vee \sigma_{\pi_1}))}{1+|t|} - \frac{\tanh(f_{\pi_2}(t \vee \sigma_{\pi_2}))}{1+|t|} \Big| \, ,
$$  
we have a metric in $\Pi$ such that $(\Pi,d)$ is a complete separable metric space. Now define $\mathcal{H}$ as the space of compact sets of $(\Pi,d)$ with the topology induced by the Hausdorff metric. Then $\mathcal{H}$ is a complete separable metric space. The Brownian web is a random element $\mathcal{W}$ of $\mathcal{H}$ whose distribution is uniquely characterized by the following three properties (see Theorem 2.1 in \cite{finr1}):
\begin{enumerate}
\item[(a)] For any deterministic $z \in \mathbb{R}^2$, almost surely there is a unique path $\pi_z$ of $\mathcal{W}$ that starts at $z$.
\item[(b)] For any finite deterministic set of points $z_1$, ... ,$z_k$ in $\mathbb{R}^2$, the collection $(\pi_1,...,\pi_n)$ is distributed as coalescing Brownian motions independent up to the time of coalescence.
\item[(c)] For any deterministic countable dense subset $\mathcal{D} \subset \mathbb{R}^2$, almost surely, $\mathcal{W}$ is the closure of $\{ \pi_z : z \in \mathcal{D} \}$ in $(\Pi,d)$.
\end{enumerate}

\medskip

This paper is organized as follows: In Section \ref{sec:model} we describe the generalized drainage network model, obtain some basic properties and state the main result of this text which says that it converges to the BW. The convergence is proved in Section \ref{sec:convergence} where Fontes, Isopi, Newmann, Ravishankar \cite{finr1} criteria is presented and discussed. As we shall explain, a carefull analysis of the proof presented in \cite{nrs} reduce our work to prove an estimate on the distribution of the coalescing time between two walks in the model, which is carried in subsection \ref{subsec:coaltimes}, and to prove that a diffusively rescaled system of $n$ walks in the model converge in distribution to $n$ coalescing Brownian motions which are independent up to the coalescence time, this is done in subsection \ref{subsec:I}. 

\medskip

\section{The generalized drainage network model} 
\label{sec:model}

Let $\mathbb{N}$ the set of positive integers. In order to present our model we need to introduce some notation:
\begin{itemize}
\item Let $(\omega(z))_{z \in \mathbb{Z}^2}$ be a family of independent Bernoulli random variables with parameter $p \in (0,1)$ and denote by $\textrm{P}_p$ the induced probability in $\{0,1\}^{\mathbb{Z}^2}$.
\item Let $(\theta (z))_{z \in \mathbb{Z}^2}$ be a family of independent Bernoulli random variables with parameter $1/2$ and denote by $\textrm{P}_{\frac{1}{2}}$ the induced probability in $\{0,1\}^{\mathbb{Z}^2}$.
\item Let $(\zeta (z))_{z \in \mathbb{Z}^2}$ be a family of independent and identically distributed random variables on $\mathbb{N}$ with probability function $q: \mathbb{N} \rightarrow [0,1]$ and denote by $\textrm{P}_{q(\cdot)}$ the induced probability in $\mathbb{N}^{\mathbb{Z}^2}$.
\end{itemize}
We suppose that the three families above are independent of each other and thus they have joint distribution given by the product probability measure $\textrm{P} = \textrm{P}_p \times \textrm{P}_{\frac{1}{2}} \times \textrm{P}_{q(\cdot)}$ on the product space $\{0,1\}^{\mathbb{Z}^2} \times \{0,1\}^{\mathbb{Z}^2} \times \mathbb{N}^{\mathbb{Z}^2}$. The expectation induced by $\textrm{P}$ will be denoted by $\textrm{E}$. 

\begin{remark}
1. Some coupling arguments used in the proofs ahead will require enlargements of the probability space, we will keep the same notation since the formal specifications of these couplings are straightforward from their descriptions. 2. At some points, we will only be interested in the distributions of the sequences $(\omega((z_1,z_2)))_{z_1 \in \mathbb{Z}}$, $(\theta ((z_1,z_2)))_{z_1 \in \mathbb{Z}}$ and $(\zeta((z_1,z_2)))_{z_1 \in \mathbb{Z}}$ which do not depend on $z_2 \in \mathbb{Z}$, justifying the following abuse of notation $\omega(z_2) = \omega((1,z_2))$, $\theta(z_2) = \theta((1,z_2))$ and $\zeta(z_2) := \zeta((1,z_2))$ for $z_2 \in \mathbb{Z}$.
\end{remark}

\smallskip

For points on the lattice $\mathbb{Z}^2$ we say that $(\tilde{z}_1,\tilde{z}_2)$ is above $(z_1,z_2)$ if $z_2 < \tilde{z}_2$, immediately above if $z_2 = \tilde{z}_2 - 1$, at the right of $(z_1,z_2)$ if $z_1 < \tilde{z}_1$ and at the left if $z_1 > \tilde{z}_1$. Moreover, we say that a site $z \in \mathbb{Z}^2$ is open if $\omega(z) =1$ and closed otherwise. 

Let $h: \mathbb{Z}^2 \times \mathbb{N} \times \{0,1\} \rightarrow \mathbb{Z}^2$ be defined by putting $h(z,k,i)$ as the $k$-th closest open site to $z$ which is immediately above it, when it is uniquely defined, otherwise $h(z,k,0)$ is the $k$-th closest open site to $z$ which is immediately above and at the left of $z$ and $h(z,k,1)$ is the $k$-th closest open site to $z$ which is immediately above and at the right of $z$. Now for every $z \in \mathbb{Z}^2$ define recursively the random path $\Gamma^z(0) = z$ and 
$$
\Gamma^z(n) = ( \Gamma^z_1(n) , \Gamma^z_2(n) ) = h(\Gamma^z(n-1),\zeta(\Gamma^z(n-1)),\theta(\Gamma^z(n-1))),
$$ 
for every $n \ge 1$.
\smallskip

Let $\mathcal{G} = ( V , \mathcal{E} )$ be the random directed graph with set of vertices $V = \mathbb{Z}^2$ and edges $\mathcal{E} = \{ (z,\Gamma^z(1)) : z \in \mathbb{Z}^2 \}$. This generalizes the two-dimensional drainage network model introduced in \cite{grs} which corresponds to the case $q(1)=1$. It will be called here the two-dimensional generalized drainage network model (GDNM). 

The random graph $\mathcal{G}$ may be seen as a set of continuous random paths $X^z = \{ X^z_s : s \ge z_2 \}$. First, define $Z^z_n = \Gamma^z_1(n)$, for every $z\in \mathbb{Z}^2$ and $n\ge 0$. The random path $X^z$ defined by linear interpolation as
$$
X^z_s = (z_2+n+1)-s) Z^z_n + (s-(z_2+n)) Z^z_{n+1} \, ,
$$ 
for every $s \in [z_2+n,z_2+n+1)$ and $n\ge 1$. Put
$$
\mathcal{X} = \{ ( \sigma^{-1} X^z_s,s)_{s\ge z_2} : z \in \mathbb{Z}^2\} \, ,
$$
where $\sigma^2$ is the variance of $X^{(0,0)}_1$ (at the end of the section we show that $\sigma^2$ is finite if $q(\cdot)$ has finite second moment).
The random set of paths $\mathcal{X}$ is called the generalized drainage network. Let
$$
\mathcal{X}_\delta = \{ (\delta x_1 , \delta^2 x_2) \in \mathbb{R}^2 : (x_1,x_2) \in \mathcal{X} \} \, ,
$$
for $\delta \in (0,1]$, be the diffusively rescaled generalized drainage network. 
Note that two paths in $\mathcal{X}$ will coalesce when they meet each other at an integer time. Therefore, if $q(1)<1$, we have a system of coalescing random walks that can cross each other. Our aim is to prove that $\mathcal{X}_\delta$ converges in distribution to $\mathcal{W}$. This extends the results in \cite{cfd} where the case $q(1)=1$ was considered and no crossing between random walks occurs. 

\medskip

We say that $q(\cdot)$ has finite range if there exists a finite set $F \subset \mathbb{N}$ such that $q(n) \neq 0$ if and only if $n \in F$.  Our main result is the following:

\begin{theorem}
\label{thm:conv}
If $q(\cdot)$ has finite range, then the diffusively rescaled GDNM, $\mathcal{X}_\delta$, converges in distribution to the $\mathcal{W}$ as $\delta \rightarrow 0$.
\end{theorem}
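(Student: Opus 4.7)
The plan is to apply the convergence criteria for the Brownian Web with crossing paths developed by Newman, Ravishankar, and Sun \cite{nrs}, rather than the original Fontes--Isopi--Newman--Ravishankar criteria, since $q(1)<1$ allows crossings. Those criteria reduce the problem to (i) tightness of $\{\mathcal{X}_\delta\}$ in $\mathcal{H}$, (ii) convergence of the finite-dimensional distributions of $n$ rescaled walks to $n$ coalescing Brownian motions that are independent up to coalescence, and (iii) a ``no-extra-paths'' condition usually verified via an upper bound on the expected number of distinct paths crossing a space-time rectangle, which in turn follows from a good tail bound on the coalescence time of two walks. So my outline mirrors the one the authors sketch in Section~\ref{sec:intro}: verify a coalescence-time estimate, and verify the $n$-path invariance principle (condition~$(I_1)$ of NRS).

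For the single-path invariance principle, the finite-range assumption on $q(\cdot)$ guarantees that the jump distribution of $Z^{(0,0)}_\cdot$ has all moments finite (in particular $\sigma^2<\infty$), so Donsker's theorem applies to $(\sigma^{-1}\delta Z^{(0,0)}_{\lfloor \delta^{-2}t\rfloor})_{t\ge 0}$. To promote this to joint convergence of $n$ walks to coalescing Brownian motions independent until coalescence, my strategy is a coupling argument. Two walks $X^{z}$ and $X^{z'}$ started at well-separated points use, at each step, only the environment $(\omega,\theta,\zeta)$ in a neighborhood of their current positions whose radius is controlled by the (finite) range of $q$; hence as long as the walks remain far apart they read from disjoint regions of the environment and evolve as independent copies. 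I would couple $(X^z,X^{z'})$ with two truly independent walks on an enlarged probability space, show that the coupling succeeds outside an event whose probability vanishes in the diffusive scaling, and check that once the walks come within $O(1)$ of each other they coalesce in a tight amount of macroscopic time (here crossings are harmless because crossing paths become almost surely equal a few steps later with positive probability). Iterating the coupling pairwise gives the $n$-path statement.

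For the coalescence-time estimate, the standard approach is to study the difference $D_n = Z^{z}_n - Z^{z'}_n$. When $|D_n|$ is larger than the range of $q$, the two walks see independent environments so $D_n$ has zero-mean i.i.d.\ increments with variance $2\sigma^2$; when $|D_n|$ is small, the increments are still bounded and mean zero by the symmetry built into $\theta$, but are correlated. I would use a Lyapunov/martingale argument on $D_n$ (or on $D_n^2 - cn$) together with an optional stopping time at the first hitting of zero, to obtain a bound of the form $\textrm{P}(\tau_{\text{coal}}>t) \le C/\sqrt{t}$ uniform in the starting gap of order $1$, exactly what NRS need to close condition~$(B_1')$ and rule out extra limiting paths.

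The main obstacle, and the place where the proof really departs from the $q(1)=1$ case of \cite{cfd}, is precisely the combination of crossing paths with dependence before coalescence. In the no-crossing model one can define coalescence as ``first meeting time'' and exploit the ordering of paths to get monotone couplings; here paths can swap sides without coalescing, so the difference $D_n$ may change sign many times before hitting zero, and the correlations in the environment persist throughout this excursion. Controlling these excursions, while still obtaining the sharp $1/\sqrt{t}$ decay for the coalescence tail, is the core analytic difficulty; once that is in hand, the rest of the argument is largely a careful adaptation of the \cite{nrs} framework, using the finite range of $q(\cdot)$ to localize dependencies in the environment.
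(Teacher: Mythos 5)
Your overall architecture matches the paper's: verify the Newman--Ravishankar--Sun criteria (conditions $I$, $T$, $B_1'$ and $E$ in place of $B_2'$), reduce everything to a coalescence-tail bound $\textrm{P}(\tau>t)\le C/\sqrt{t}$ plus the $n$-walk invariance principle, and prove the latter by an environment-modification coupling. However, there are two genuine gaps. First, and most seriously, your proposed proof of the coalescence-tail bound --- a Lyapunov/martingale argument on $D_n$ with optional stopping at the first hitting of zero --- does not engage with the actual obstruction, which you yourself name in your last paragraph. Optional stopping on $D_n$ or $D_n^2-cn$ controls the first exit time from an interval or the first sign change, not the first time the difference walk hits \emph{exactly} zero; since the walks can cross, $D_n$ can jump over $0$ repeatedly, and the process is spatially inhomogeneous with correlated increments near the origin, so there is no off-the-shelf martingale identity for $\tau=\inf\{n: D_n=0\}$. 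The paper resolves this by a Skorohod embedding of $Y^1_n=B(T_n)$, a decomposition over the successive sign-change times $a_j$, stochastic domination of the excursion lengths by i.i.d.\ square-integrable variables $R_j$, and --- the crucial and only place where finite range of $q(\cdot)$ enters --- a uniform lower bound $\inf_{m>0}\textrm{P}(Z^m_1-Z^0_1=0\mid Z^m_1-Z^0_1\le 0)>0$, i.e.\ that two walks which cross coalesce at that step with probability bounded away from zero. Without some substitute for this last estimate, the number of sign changes before coalescence is not geometrically controlled and the $1/\sqrt{t}$ tail does not follow. Your proposal asserts the conclusion of this analysis but supplies no mechanism for it.

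Second, your repeated claim that two walks at separation larger than the range of $q$ read disjoint pieces of the environment, and hence have independent increments, is false. The finite range of $q(\cdot)$ bounds the \emph{number of open sites} a walk skips, not the spatial extent of its jump: a long run of closed sites forces the walk to look arbitrarily far, so arbitrarily large jumps occur with positive probability and the environment windows of two walks can overlap at any separation (this is exactly Remark 2.2 of the paper). Consequently the increments of $D_n$ are never exactly i.i.d.\ at large separations, and your coupling for condition $I$ cannot be justified by ``disjoint neighborhoods of radius equal to the range of $q$.'' The paper's coupling instead truncates the environment windows at scale $n^{3/4}$ and shows via a third-moment bound that the truncation fails with probability $O(n^{-1/4})$; some quantitative control of this type is indispensable and is missing from your argument.
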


\begin{remark}
1. Note that the hypothesis that $q(\cdot)$ has finite range does not imply that the random paths in the GDNM have finite range jumps. It should be clear from the definition that any sufficiently large size of jump is allowed with positive probability. 2. We only need the hypothesis that $q(\cdot)$ has finite range at a specific point in the proof of Theorem \ref{thm:conv}(it will be discussed later), other arguments just require that $q(\cdot)$ has at most  finite absolute fifth moment. So we conjecture that Theorem \ref{thm:conv} holds under this condition.
\end{remark}

\medskip

The next section is devoted to the proof of Theorem \ref{thm:conv}. We finish this section with some considerations about the random paths in the drainage network. The processes $(Z^z_n)_{n \ge 0}$, $z \in \mathbb{Z}^2$, are irreducible aperiodic symmetric random walks identically distributed up to a translation of the starting point. Now consider $X = (X_t)_{\{t\ge 0\}}$ as a random walk with the same distribution of $X^{(0,0)}$.
We show that if $q(\cdot)$ has finite absolute m-th moment then $X_s$ also have this property. 

\medskip

\begin{proposition}
\label{prop:finmom}
If $q(\cdot)$ has finite absolute m-th moment then $X_s$ also has finite absolute m-th moment.
\end{proposition}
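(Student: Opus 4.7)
The plan is to dominate $X_s$ by partial sums of an i.i.d.\ sequence whose common distribution has finite $m$-th moment. For integer $n$ one has $X_n = Z^{(0,0)}_n$, and for $s \in [n,n+1)$ the value $X_s$ is a convex combination of $Z^{(0,0)}_n$ and $Z^{(0,0)}_{n+1}$, so $|X_s|^m \le 2^{m-1}(|Z^{(0,0)}_n|^m + |Z^{(0,0)}_{n+1}|^m)$. Hence it suffices to bound $\textrm{E}[|Z^{(0,0)}_n|^m]$ for every $n$.

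Next, I would observe that the increments $W_k := Z^{(0,0)}_k - Z^{(0,0)}_{k-1}$ are i.i.d.\ copies of $W_1 = Z^{(0,0)}_1$. The $k$-th step depends only on $(\omega(\cdot,k),\theta(\cdot,k),\zeta(\cdot,k))$ together with the current position $Z^{(0,0)}_{k-1}$; the independence of the fields across rows and their translation invariance along each row imply that the conditional law of $W_k$ given the past agrees with the unconditional law of $W_1$. Applying Minkowski yields $\textrm{E}[|Z^{(0,0)}_n|^m]^{1/m} \le n\,\textrm{E}[|W_1|^m]^{1/m}$, so the proposition reduces to showing $\textrm{E}[|W_1|^m] < \infty$.

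For this last step I would use a crude one-sided dominance. On the row $z_2 = 1$, let $G_1,G_2,\ldots$ be the gaps between consecutive open sites strictly to the right of the origin and $G_1',G_2',\ldots$ the analogous gaps to the left. These are independent geometric$(p)$ variables and therefore admit finite moments of every order. Regardless of the tie-breaking rule determined by $\theta((0,0))$, the $k$-th closest open site to the origin on this row lies at horizontal distance at most $\sum_{i=1}^k G_i + \sum_{i=1}^k G_i'$. Conditioning on $\zeta((0,0)) = k$ and applying Minkowski once more gives
\[
\textrm{E}\bigl[\,|W_1|^m \,\big|\, \zeta((0,0))=k\,\bigr] \le C(p,m)\,k^m,
\]
for a constant $C(p,m)$ depending only on $p$ and $m$; summing against the law of $\zeta((0,0))$ produces $\textrm{E}[|W_1|^m] \le C(p,m)\,\textrm{E}[\zeta((0,0))^m] < \infty$ by hypothesis. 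The only mildly delicate point is the tie-breaking rule, and it is handled entirely by the one-sided bound above; everything else is a direct appeal to translation invariance and Minkowski's inequality.
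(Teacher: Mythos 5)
Your argument is correct, and its first half coincides with the paper's: the paper likewise reduces the claim to showing that the increment $\xi_1 = Z_1 - Z_0$ has finite absolute $m$-th moment (the interpolation step and the i.i.d.\ structure of the increments being taken for granted there, while you spell them out via convexity and Minkowski). The divergence is in how that single-increment moment is controlled. The paper writes down the probability function $\tilde q(z)$ of $\xi_1$ explicitly as a sum over $k$ of $q(k)$ times binomial probabilities for the number of open sites in $\{-z+1,\dots,z-1\}$, bounds the resulting combination of binomial coefficients by a single one, exchanges the order of summation, and identifies the inner sum over $z$ as half the $m$-th moment of a negative binomial with parameters $k+1$ and $p$, hence a degree-$m$ polynomial in $k$. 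You instead dominate $|\xi_1|$ pathwise, conditionally on $\zeta((0,0))=k$ (legitimate, since $\zeta$ is independent of the $\omega$-field), by the sum of $2k$ i.i.d.\ geometric$(p)$ gaps and apply Minkowski once more, arriving at the same bound $C(p,m)k^m$ without ever computing $\tilde q$. Both routes ultimately rest on the fact that the position of the $k$-th open site is negative binomial; yours is shorter and sidesteps the combinatorial bookkeeping and the tie-breaking case analysis (correctly absorbed by your one-sided bound), at the cost of not producing the explicit formula for $\tilde q$ — which the paper does not in fact use elsewhere, so nothing is lost.
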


\begin{proof}
It is enough to show that the increments of $Z_n := Z^{(0,0)}_n$ have finite absolute m-th moment. Write 
$$
\xi_n = Z_n - Z_{n-1}
$$
for the increments of $Z$. Then $(\xi_n)_{n \ge 1}$ is a sequence of iid random variables with symmetric probability function $\tilde{q}(\cdot)$ such that $\tilde{q}(z)$ is equal to
\begin{eqnarray}
\frac{1}{2} \sum_{k=1}^{2z} \, q(k) \, \textrm{P} \Big( \sum_{j=-z+1}^{z-1} \omega(z) = k-1 , \, \omega(z)( 1 - \omega(-z)) = 1 \textrm{ or } \omega(-z)( 1 - \omega(z)) = 1 \Big) \nn \\
 + \sum_{k=2}^{2z+1} \, q(k) \, \textrm{P} \Big( \sum_{j=-z+1}^{z-1} \omega(z) = k-2 , \, \omega(z)= 1 \textrm{ and } \omega(-z) = 1 \Big) \nn
\end{eqnarray}
which is equal to
\begin{eqnarray}
& 2 p (1-p)^{2z} q(1) + p^{2z+1} q(2z+1) + \qquad \qquad \qquad \qquad \qquad &  \nn \\ \qquad \qquad & + \sum_{k=2}^{2z} p^k (1-p)^{2z-k+1}
\Big[ 2 \Big( \!\! \begin{array}{c} 2z-1 \\ k-1 \end{array} \!\! \Big) + \Big( \!\! \begin{array}{c} 2z-1 \\ k-2 \end{array} \!\! \Big)  \Big] q(k) & \nn
\end{eqnarray}
Therefore, since 
$$
2 \Big( \!\! \begin{array}{c} 2z-1 \\ k-1 \end{array} \!\! \Big) + \Big( \!\! \begin{array}{c} 2z-1 \\ k-2 \end{array} \!\! \Big) \le
\Big( \!\! \begin{array}{c} 2z+1 \\ k \end{array} \!\! \Big)
$$
the absolute m-th moment of $\tilde{q}$ is bounded above by
$$
\sum_{z=1}^{+\oo} z^m \sum_{k=1}^{2z+1} q(z) \Big( \!\! \begin{array}{c} 2z+1 \\ k \end{array} \!\! \Big) p^k (1-p)^{2z -k+1}
$$
which is equal to
$$
\frac{1}{p} \sum_{k=1}^{+\oo} q(k) \sum_{z=\left\lfloor k/2 \right\rfloor}^{+\oo} z^m \Big( \!\! \begin{array}{c} 2z+1 \\ k \end{array} \!\! \Big) p^{k+1} (1-p)^{(2z+2)-(k+1)}
$$
the sum over $z$ is bounded by one half of the absolute m-th moment of a negative binomial random variable with parameters $k+1$ and $p$ which is a polynomial of degree $m$ on $k$. Therefore  
$$
\sum_{z=1}^{+\oo} z^m \tilde{q}(z) \le \sum_{k=1}^{+\oo} r(k) q(k) \, ,
$$
where $r$ is a polynomial of degree $m$. If $q(\cdot)$ has finite m-th moment then the right hand side above is finite, which means that $\tilde{q}(\cdot)$ also has finite m-th moment. \end{proof}

\medskip

\section{Convergence to the Brownian Web} 
\label{sec:convergence}

The proof of Theorem \ref{thm:conv} follows from the verification of four conditions given in \cite{finr} for convergence to the Brownian Web of a system of coalescing random paths that can cross each other. These conditions are described below:

\begin{itemize}
\item[(I)] Let $\mathcal{D}$ be a countable dense set in $\mathbb{R}^2$. For every $y \in \mathbb{R}^2$, there exist single random paths $\theta^y_\delta \in \mathcal{X}_\delta$, $\delta> 0$, such that for any deterministic $y_1,...,y_m \in \mathcal{D}$, $\theta^{y_1}_\delta, ... , \theta^{y_n}_\delta$ converge in distribution as $\delta \ra 0+$ to coalescing Brownian motions, with diffusion constant $\sigma$ depending on $p$, starting at $y_1,...,y_m$. 
\item[(T)] Let $\Gamma_{L,T} = \{ z \in \mathbb{Z}^2 : z_1 \in [-L,L] \textrm{ and } z_2 \in [-T,T] \}$ and $R_{(x_0,t_0)} (u,t)= \{ z \in \mathbb{Z}^2: z_1 \in [x_0 - u , x_0 + u] \textrm{ and } z_2 \in [t_0, t_0+t]\}$. Define $A_{(x_0,t_0)} (u,t)$ as the event that there exists a path starting in the rectangle $R_{(x_0,t_0)} (u,t)$ that exits the larger rectangle $R_{(x_0,t_0)} (Cu,2t)$ through one of its sides, where $C>1$ is a fixed constant. Then 
$$
\limsup_{t \ra 0+} \frac{1}{t} \limsup_{\delta \ra 0+} \sup_{ (x_0,t_0) \in \Gamma_{L,T} } \textrm{P} \big( A_{\mathcal{X}_\delta} (x_0,t_0;u,t) \big) = 0
$$
\item[($B_1^\prime$)] Let $\eta_{\mathcal{V}}(t_0,t;a,b)$, $a<b$, denote the random variable that counts the number of distinct points in $\mathbb{R}\times \{t_0+t\}$ that are touched  by paths in $\mathcal{V}$ which also touch some point in $[a,b]\times \{t_0\}$. Then, for every $\beta > 0$,
$$
\limsup_{\epsilon \ra 0+} \limsup_{\delta \ra 0+} \sup_{ t > \beta } \sup_{ (a,t_0) \in \mathbb{R}^2 } \textrm{P} ( \eta_{\mathcal{X}_\delta}(t_0,t;a,a+\epsilon) \ge 2) = 0 \, .
$$
\item[($B_2^\prime$)] For every $\beta > 0$,
$$
\limsup_{\epsilon \ra 0+} \frac{1}{\epsilon} \limsup_{\delta \ra 0+} \sup_{ t > \beta } \sup_{ (a,t_0) \in \mathbb{R}^2 } \textrm{P} ( \eta_{\mathcal{X}_\delta} (t_0,t;a,a+\epsilon) \ge 3) = 0 \, .
$$
for every $\beta > 0$. 
\end{itemize}

\bigskip

In \cite{nrs} a condition is derived to replace $(B^\prime_2)$ in the proof of convergence of diffusively rescaled non-nearest neighbor coalescing random walks to the Brownian web. Their condition is based on duality and can be expressed as
\begin{itemize}
\item[(E)] Let $\hat{\eta}_\mathcal{V} (t_0,t;a,b)$, $a<b$, be the number distinct points in $(a,b)\times \{t_0+t\}$ that are touched by a path that also touches $\mathbb{R} \times \{t_0\}$. Then for any subsequential limit $\mathcal{X}$ of $\mathcal{X}_\delta$ we have that that
$$
\textrm{E}[ \hat{\eta}_\mathcal{X} (t_0,t;a,b) ] \le \frac{b-a}{\sqrt{\pi t}} \, . 
$$   
\end{itemize}

\bigskip

If we have proper estimate on the distributions of the coalescing times and condition I, then the proof of conditions $B^\prime_1$, T and E follows from adaptations of arguments presented in \cite{nrs}, see also \cite{s}. The estimate we need on the distribution of the coalescing time of two random walks in the drainage network starting at time zero is that the probability that they coalesce after time $t$ is of order $1/\sqrt{t}$. This is central in arguments related to estimates on counting variables associated to the density of paths in the system of coalescing random walks under consideration. Let us be more precise. For each $(z,w) \in \mathbb{Z}^2$, let $Y^{z,w}$ be the random path 
$$
Y^{z,w}_s = X^z_s - X^w_s \, , \ s\ge \max ( z_2 , w_2 ) \, .
$$
The processes $(Y^{z,w})_{(z,w) \in \mathbb{Z}^2}$ are also spatially inhomogeneous random walks with mean zero square integrable increments, identically distributed up to a translation of $z$ and $w$ by the same amount. For $k \in \mathbb{Z}$, let $Y^k = (Y^k_t)_{\{t\ge 0\}}$ be random walk with the same distribution of $Y^{(0,0),(0,k)}$. Define the random times
$$
\tau_k := \min \{ t\ge 0, \ t \in \mathbb{N} : Y^k_t = 0 \} 
$$
and
$$
\nu_k(u) := \min \{ t\ge 0, \ t \in \mathbb{N} : Y^k_t \ge u \} \, ,
$$
for every $k \in \mathbb{Z}$ and $u>0$.

\bigskip

\begin{proposition} 
\label{prop:coaltime1}
If $q(\cdot)$ has finite range, there exists a constant $C>0$, such that 
\begin{equation}
\label{eqprop:coaltime}
\textrm{P} (\tau_k > t) \le \frac{C \, |k|}{\sqrt{t}} \, , \, \textrm{ for every } t>0 \textrm{ and } k \in \mathbb{Z}.
\end{equation}
\end{proposition}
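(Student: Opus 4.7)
The plan is to reduce the bound to the classical first-passage estimate for an iid symmetric mean-zero random walk of finite variance on $\mathbb{Z}$, by coupling $Y^k$ with such a walk outside a bounded interaction zone.

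I would first record two structural facts about $Y^k$. By the reflection symmetry of the environment, each of $X^{(0,0)}$ and $X^{(0,k)}$ has symmetric step distribution conditionally on the past --- the relevant environment on the next row is iid Bernoulli and hence reflection-symmetric around each walker's current position --- so each walker is individually a martingale, and therefore so is their difference $Y^k$. Proposition \ref{prop:finmom} guarantees finiteness of all moments of the increments of $Y^k$ under the finite-range assumption on $q(\cdot)$. Secondly, when the two walkers are far apart their next jumps typically depend on disjoint regions of the environment, so the increment of $Y^k$ behaves essentially as $\xi-\xi'$ with $\xi,\xi'$ iid copies of the single-walker step, mean zero and variance $2\sigma^2$.

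With these in hand I would couple $Y^k$ with an iid symmetric mean-zero random walk $\tilde Y$ on $\mathbb{Z}$, of step distribution $\xi-\xi'$ and starting at $k$, arranged so that $Y^k$ and $\tilde Y$ follow identical increments whenever the separation of the two walkers exceeds some threshold $R$. Classical first-passage estimates (via Spitzer, Borovkov, or the local central limit theorem) give $\textrm{P}(\tilde\tau_k > t) \le C|k|/\sqrt{t}$, where $\tilde\tau_k$ is the first hitting time of $0$ by $\tilde Y$. The excursions of $Y^k$ into the interaction zone $\{|y|\le R\}$ are controlled by a renewal argument: each such excursion either produces coalescence (so $\tau_k$ occurs) or escapes the zone within a geometrically distributed number of steps, since from inside the zone the conditional probability of a jump exceeding the zone-width is bounded below by a positive constant. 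In line with the notation of the paper, one may equivalently decompose $\textrm{P}(\tau_k > t) \le \textrm{P}(\nu_k(u) < \tau_k) + \textrm{P}(\tau_k > t,\, \nu_k(u) \ge t)$ with $u = A\sqrt{t}$: optional stopping of the martingale $Y^k$ at $\nu_k(u)\wedge\tau_k\wedge n$ (letting $n\to\infty$) bounds the first term by $\textrm{P}(\nu_k(u)<\tau_k) \le Ck/(A\sqrt{t})$ after controlling overshoot and undershoot by finite-mean random variables, while the second term is the confinement probability $\textrm{P}(0 < Y^k_s < u \text{ for all } s \le t)$ handled by the coupling above.

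The main obstacle is this confinement estimate, that is, the coupling-based reduction from $Y^k$ to the iid walk $\tilde Y$ through excursions into the interaction zone. Because $q(1)<1$ permits crossings, $Y^k$ genuinely straddles $0$, so the coupling has to handle both positive and negative excursions through the zone, and the unbounded single-walker jumps mean that $R$ is not a deterministic bound on the step size but only a typical separation beyond which independence holds. The finite-range assumption on $q$ is used precisely to obtain uniform control on the tails of the separation required for approximate independence, in accordance with the remark following Theorem \ref{thm:conv} that finite range enters at one specific point in the proof.
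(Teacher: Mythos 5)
Your high-level shape (separate an ``exit'' event from a ``confinement'' event, and exploit near-independence of well-separated walkers) is reasonable, but the proposal has gaps at exactly the points where the crossing of paths bites, and it is not the paper's route. First, the optional-stopping bound $\mathrm{P}(\nu_k(u)<\tau_k)\le Ck/u$ does not follow from the martingale property here: $\tau_k$ is the hitting time of the single point $0$, and since $q(1)<1$ the walk $Y^k$ can jump across $0$ and become arbitrarily negative before coalescing, so the stopped martingale $Y^k_{\nu_k(u)\wedge\tau_k\wedge n}$ is not bounded below and the gambler's-ruin inequality is unavailable. (Indeed the paper proves this exit estimate, Lemma \ref{prop:coaltime2}, as a \emph{consequence} of Proposition \ref{prop:coaltime1}, not as an ingredient.) For the same reason your confinement event $\{0<Y^k_s<u,\ s\le t\}$ is not implied by $\{\tau_k>t,\ \nu_k(u)\ge t\}$; what must be controlled is avoidance of the single site $0$ by a two-sided, state-dependent walk. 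Second, and more seriously, your renewal argument for excursions into the interaction zone is missing its key ingredient: you need a \emph{uniform positive lower bound on the probability of exact coalescence} each time the two walkers come close or cross, in order to conclude that the number of such encounters before coalescence is geometric. ``Escapes the zone in a geometric number of steps'' does not give this; the walk could enter and leave the zone, or cross $0$, indefinitely without ever landing exactly on $0$. This lower bound (probability of coalescence given a crossing, uniform in the separation $m$) is precisely the content of item (iii) of the paper's Lemma \ref{lemma:coupling}, and it is \emph{there} --- not in tail control of the separation needed for approximate independence --- that the finite-range hypothesis on $q(\cdot)$ is used.

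The paper's actual argument is structured differently: it Skorohod-embeds $Y^1$ into a Brownian motion $B$ via stopping times $T_n$, splits $\mathrm{P}(\tau>t)$ according to whether $T_t\le\zeta t$ (shown exponentially small via $\sup_m\mathrm{P}((U_1(m),V_1(m))=(0,0))<1$, Lemma \ref{p00}) or not, and in the latter case decomposes the trajectory at the successive sign changes $a_j$ of $Y^1$. Each sign change is a crossing, hence coalesces with probability bounded below, so the number of sign changes before coalescence is dominated by a geometric variable; the embedded times $T_{a_j}$ are dominated by Brownian hitting times $J_j$ built from iid square-integrable variables $R_j$, whose tails (Lemma \ref{tailbehavior}) give the $l^{3/2}/\sqrt{t}$ bound that is then summed against $c_5^{l-1}$. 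To salvage your approach you would need, at minimum, (a) a correct two-sided treatment of the avoidance-of-$0$ event, (b) the uniform coalescence-given-crossing lower bound, and (c) a quantitative count of encounters with the interaction zone up to time $t$ (this is where the $\sqrt{t}$ ultimately has to come from), which is essentially what the paper's embedding accomplishes.
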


Proposition \ref{prop:coaltime1} has some consequences that we now state:

\begin{lemma} 
\label{prop:coaltime2}
If (\ref{eqprop:coaltime}) holds and $q(\cdot)$ has finite absolute third moment, then for every $u>0$ and $t>0$, there exists a constant $C=C(t,u)>0$, such that
$$
\limsup_{\delta \ra 0} \frac{1}{\delta} \, \sup_{k \le 1} \textrm{P} \Big( \nu_k (\delta^{-1} u) < \tau_k \wedge \delta^{-2} t \Big) < C  \, .
$$
\end{lemma}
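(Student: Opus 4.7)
My plan is to reduce the event to a one-sided first-passage event and apply the optional stopping theorem to the martingale $Y^k$, with a separate strong-Markov argument for $k \le 0$. The key inputs are Propositions \ref{prop:finmom} and \ref{prop:coaltime1}.

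First, since $\{\nu_k(\delta^{-1}u) < \tau_k \wedge \delta^{-2}t\} \subseteq \{\nu_k(\delta^{-1}u) < \tau_k\}$, it suffices to show $\textrm{P}(\nu_k(\delta^{-1}u) < \tau_k) = O(\delta)$ uniformly for the bounded values of $k$ in the supremum. The main case is $k = 1$. Recall that $Y^k$ is a mean-zero martingale whose increments have finite absolute third moment by Proposition \ref{prop:finmom}. Set $T := \tau_k \wedge \nu_k(\delta^{-1}u)$. I would apply OST to the truncated stopping time, obtaining $\textrm{E}[Y^k_{T \wedge N}] = k$ for every $N$, then pass $N \to \oo$ using uniform integrability of $\{Y^k_{T \wedge N}\}_N$. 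This UI would be established through the Doob decomposition
\[
\textrm{E}\bigl[(Y^k_{T \wedge N})^2\bigr] = k^2 + \textrm{E}\Bigl[\sum_{i \le T \wedge N} \textrm{E}[\xi_i^2 \mid \mathcal{F}_{i-1}]\Bigr] \le k^2 + \sigma^2\, \textrm{E}[T],
\]
together with a preliminary bound $\textrm{E}[T] = O(\delta^{-2})$ obtained by applying OST to $(Y^k_n)^2 - \sigma^2 n$ and using the $L^2$-bounded overshoot at level $\delta^{-1}u$ (finite by the third moment assumption). Once OST is in force, the structure $Y^k_T = 0$ on $\{T = \tau_k\}$ and $Y^k_T \ge \delta^{-1}u$ on $\{T = \nu_k(\delta^{-1}u)\}$ yields
\[
k = \textrm{E}[Y^k_T] \ge \delta^{-1} u \cdot \textrm{P}(\nu_k(\delta^{-1}u) < \tau_k),
\]
that is, $\textrm{P}(\nu_k(\delta^{-1}u) < \tau_k) \le k\delta/u = O(\delta)$.

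For $k = 0$ the event is empty since $\tau_0 = 0$. For $k \le -1$ the one-sided OST above is vacuous, so I would instead apply the strong Markov property at $\tau^+_k := \min\{n \ge 0 : Y^k_n > 0\}$ and split according to the value of $Y^k_{\tau^+_k}$. If $\tau^+_k < \tau_k$ and $Y^k_{\tau^+_k} \in (0, \delta^{-1}u)$, the strong Markov property combined with the positive-$k$ estimate bounds the conditional probability of reaching $\delta^{-1}u$ before $0$ by $\delta Y^k_{\tau^+_k}/u$; the expected truncated overshoot is $O(1)$ by renewal arguments under finite-mean increments, so this contributes $O(\delta)$. If $Y^k_{\tau^+_k} \ge \delta^{-1}u$, a single jump of size at least $\delta^{-1}u$ has occurred, and a union bound over the at most $\delta^{-2}t$ available steps combined with the third-moment Markov estimate $\textrm{P}(|\xi| \ge \delta^{-1}u) \le C\delta^3/u^3$ yields another $O(\delta)$ term (here the cutoff $\delta^{-2}t$ is essential). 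The \emph{main obstacle} is the rigorous justification of OST in the $k = 1$ case: since the increments of $Y^k$ are unbounded and $\textrm{E}[T]$ is only of order $\delta^{-2}$, establishing UI of $\{Y^k_{T \wedge N}\}$ requires a careful interplay between the Doob decomposition and Proposition \ref{prop:coaltime1}, and it is precisely where the finite third moment hypothesis (through Proposition \ref{prop:finmom}) is needed.
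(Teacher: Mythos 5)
Your route is genuinely different from the paper's, and it has a gap that I do not think can be repaired along the lines you sketch: the optional stopping step in the main case. The stopping time $T=\tau_k\wedge\nu_k(\delta^{-1}u)$ is \emph{not} the exit time of a bounded interval, because in this model paths cross: $Y^k$ can jump over $0$ without hitting it and then wander arbitrarily far into the half-line on the opposite side of $0$ from the level $\delta^{-1}u$, with neither stopping rule triggered. Two consequences. First, your preliminary bound $\textrm{E}[T]=O(\delta^{-2})$ is false: on the event that the walk crosses $0$ before stopping, $T$ dominates the first return time of a mean-zero, finite-variance walk to the opposite half-line (a ladder epoch), which has infinite expectation; so $\textrm{E}[T]=\oo$ and the Doob-decomposition bound $\textrm{E}[(Y^k_{T\wedge N})^2]\le k^2+\sigma^2\textrm{E}[T]$ gives nothing. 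The overshoot estimate only controls $Y_{T\wedge N}$ on the side of $\delta^{-1}u$, not the unbounded negative excursions, so applying OST to $(Y^k_n)^2-\sigma^2 n$ does not close the circle either. Second, uniform integrability of $\{Y^k_{T\wedge N}\}$ genuinely fails, not merely resists proof: with the paper's convention $Y^k_0=-k$ the walk starts on the opposite side of $0$ from the target level, so $Y^k_T\in\{0\}\cup[\delta^{-1}u,\oo)$ gives $\textrm{E}[Y^k_T]\ge 0>Y^k_0$, contradicting the OST identity outright; with the other sign convention one only knows $\textrm{E}[Y^k_{T\wedge N}]=k$, and $\textrm{E}[Y_N^{-}\mathbb{1}_{\{T>N\}}]$ is of order $\sqrt{N}\cdot\textrm{P}(T>N)=O(1)$ rather than $o(1)$, so the identity $\textrm{E}[Y^k_T]=k$ cannot be extracted. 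This is precisely the crossing-path obstruction the paper is built around; your gambler's-ruin argument would be correct verbatim in the non-crossing case $q(1)=1$, where $\tau_k$ coincides with the exit time of $(0,\delta^{-1}u)$.

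The paper proves the lemma by an entirely different mechanism, which uses the two ingredients your main case discards, namely the time cutoff $\delta^{-2}t$ and Proposition \ref{prop:coaltime1}. By the strong Markov property at $\nu_k(\delta^{-1}u)$, on the event $\{\nu_k(\delta^{-1}u)<\tau_k\wedge\delta^{-2}t\}$ the walk, having escaped to distance $\delta^{-1}u$ without coalescing, stays within $\delta^{-1}u$ of its new position for a further $\delta^{-2}t$ units of time with probability bounded below (uniformly in $\delta$, via Donsker: the complementary probability is at most $4e^{-u^2/8t}$ in the limit), and on that event $\tau_k>\delta^{-2}t$. Hence
$$
\textrm{P}\big(\nu_k(\delta^{-1}u)<\tau_k\wedge\delta^{-2}t\big)\,\inf_{l}\textrm{P}\big(Y^l\in\textrm{B}^l(\delta^{-1}u,\delta^{-2}t)\big)\le\textrm{P}(\tau_k>\delta^{-2}t)\le\frac{C|k|\delta}{\sqrt{t}}\,,
$$
which gives the claim directly. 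If you want to keep a martingale flavour, you would have to first reduce to a two-sided exit problem (e.g.\ by stopping the walk when it leaves a symmetric interval around $0$ and then paying for the coalescence-upon-crossing probability, as in Lemma \ref{lemma:coupling}(iii)), but at that point you are essentially reconstructing the paper's machinery.
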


\medskip

\begin{lemma}\label{density}
Let $\mathcal{X}_0 = \{(X_s^{(z_1,0)},s):z_1 \in \mathbb{Z}, s \geq 0\}$ be a system of coalescing random walks starting from every point of $\mathbb{Z}$ at time $0$ with increments distributed as the increments of $Z$. Denote by $O_t$ the event that there is a walker seated at the origin by time $t$. If (\ref{eqprop:coaltime}) holds, then
$$
P(O_t) \le \frac{C}{\sqrt{t}}
$$
for some positive constant $C$ independent of everything else.
\end{lemma}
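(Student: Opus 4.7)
My plan is to identify, on the event $O_t$, a canonical walker at the origin and then apply Proposition \ref{prop:coaltime1} to bound the resulting sum of probabilities.

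First I would verify that the random set $S_t := \{k \in \mathbb{Z} : X^{(k,0)}_t = 0\}$ is almost surely bounded below. By spatial translation invariance of the iid environment, $\textrm{P}(X^{(k,0)}_t = 0) = p_t(-k)$, where $p_t(\cdot)$ denotes the law of $X^{(0,0)}_t$. Since $p_t$ is a probability measure on $\mathbb{Z}$ (well defined thanks to Proposition \ref{prop:finmom}), a union bound yields
$$
\textrm{P}\bigl(S_t \cap (-\infty, -N] \neq \emptyset\bigr) \leq \sum_{k \leq -N} p_t(-k) \;\longrightarrow\; 0 \quad \text{as } N \to \infty.
$$
Hence $K_t := \min S_t$ is almost surely well-defined on $O_t$, and the events $\{K_t = k\}$, $k \in \mathbb{Z}$, form a partition of $O_t$.

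The key observation is that on $\{K_t = k\}$ one has $X^{(k-1,0)}_t \neq 0 = X^{(k,0)}_t$. Since coalescing paths never separate once they meet, this forces the coalescence time $\tau^{k-1,k}$ of $X^{(k-1,0)}$ and $X^{(k,0)}$ to satisfy $\tau^{k-1,k} > t$, so $\{K_t = k\} \subseteq \{X^{(k,0)}_t = 0\} \cap \{\tau^{k-1,k} > t\}$. By spatial translation invariance, each summand becomes
$$
\textrm{P}(K_t = k) \leq \textrm{P}\bigl(X^{(0,0)}_t = -k,\; \tau^{-1,0} > t\bigr),
$$
and summing over $k$, using that the events $\{X^{(0,0)}_t = -k\}$ are disjoint, gives
$$
\textrm{P}(O_t) \leq \sum_{k \in \mathbb{Z}} \textrm{P}\bigl(X^{(0,0)}_t = -k,\; \tau^{-1,0} > t\bigr) = \textrm{P}(\tau^{-1,0} > t) \leq \frac{C}{\sqrt{t}}
$$
by Proposition \ref{prop:coaltime1} applied with unit offset.

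The main obstacle is conceptual rather than technical: recognizing that the minimal-index identification $K_t$ reduces the occupancy probability at the origin to a single two-path non-coalescence probability, which is exactly what Proposition \ref{prop:coaltime1} estimates. The argument uses only the absorbing nature of coalescence---not any ordering of paths---and therefore goes through unchanged in the crossing-paths setting considered in this paper.
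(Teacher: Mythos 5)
Your proof is correct and is essentially the paper's argument: the paper simply defers to Lemma 2.7 of \cite{nrs}, whose proof is exactly this leftmost-ancestor decomposition, namely summing over the minimal starting index $k$ of a walker at the origin, noting that the walker from $k-1$ cannot have coalesced with it, translating, and collapsing the sum to $\textrm{P}(\tau_1 > t) \le C/\sqrt{t}$ via Proposition \ref{prop:coaltime1}. Your closing remark that only the absorbing nature of coalescence (not path ordering) is used is precisely why the argument survives the crossing-paths, dependent setting here.
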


\medskip

\begin{remark}
The Proposition \ref{prop:coaltime1}, Lemma \ref{prop:coaltime2} and Lemma \ref{density} are respectively versions of Lemmas 2.2, 2.4 and 2.7 in section 2 of \cite{nrs}.  Their results hold for the difference of two independent random walks what is not our case. The Proposition \ref{prop:coaltime1} is by itself a remarkable result in our case and is going to be proved in Section \ref{subsec:coaltimes}.
In the proof, we need $q(\cdot)$ with finite range due to an estimate to bound the probability that two walks in the GDNM coalesce given they cross each other (item (iii) of Lemma \ref{lemma:coupling}). It is the only place where the finite range hypothesis is used. Both Lemmas follow from inequality (\ref{eqprop:coaltime}) in Proposition \ref{prop:coaltime1} following arguments presented in \cite{nrs}, although we also prove Lemma \ref{prop:coaltime2} in Section \ref{subsec:coaltimes} to make clear that it holds in our case. Concerning the proof of Lemma \ref{density}, it is exactly the same as the proof of Lemma 2.7 in \cite{nrs}.
\end{remark}

\medskip

Now let us return to the analysis of conditions $B^\prime_1$, T and E.
The idea we carry on here is that if convergence to the BW holds for a system with non-crossing paths then the proof of \cite{nrs} can be adapted to similar systems allowing crossing paths if we have Proposition \ref{prop:coaltime1} and condition $I$. In this direction, \cite{nrs} is extremly useful since it is carefully written in a way that for each result one can clearly identify which hypotheses are needed to generalize its proof to other systems of coalescing random walks.

\medskip

The proof of $B^\prime_1$ is the same as that of section 3 in \cite{nrs} for systems of coalescing random walks independent before coalescence. 
In order to reproduce the proof given in \cite{nrs} we need finite second moment of $X_s$ and that the coalescing time of two walkers starting at distance $1$ apart is of order $1/\sqrt{t}$ which is the content of proposition \ref{prop:coaltime1}.

\medskip

The proof of condition T is also the same as that of section 4 in \cite{nrs} for systems of coalescing random walks independent before coalescence. The proof depends only on two facts: first, an upper bound for the probability of the time it takes for two random walks to get far away one from each other before they coalesce, which is the content of Lemma \ref{prop:coaltime2}; second, finite fifth moment of the increments of the random walks, which follows from the finite fifth moment of $q(\cdot)$.

\medskip

Now we consider condition $E$, but first we introduce more notation. For an $\left(\mathcal{H},\mathcal{F}_{\mathcal{H}}\right)-$valued random variable $\mathbb{X}$, define $\mathbb{X}^{t^-}$ to be the subset of paths in $\mathbb{X}$ which start before or at time $t$. Also denote by $\mathbb{X}(t) \subset \mathbb{R}$ the set of values at time $t$ of all paths in $\mathbb{X}$. In \cite{nrs} it is shown that condition $E$ is implied by condition $E^\prime$ stated below:
\begin{itemize}
\item[($E^{\prime}$)] If $\mathcal{Z}_{t_0}$ is any subsequential limit of $\{\mathcal{X}_{\delta_n}^{t_0^-}\}$ for any $t_0 \in \mathbb{R},$ for any non-negative sequence $(\delta_n)_n$ going to zero as $n$ goes to infinity, then $\forall \ t, a, b \in \mathbb{R}$ with $t > 0$ and $a < b, E[\hat{\eta}_{\mathcal{Z}_{t_0}}(t_0,t;a,b)] \le E[\hat{\eta}_{\mathcal{W}}(t_0,t;a,b)] = \frac{b-a}{\sqrt{\pi t}}.$
\end{itemize}
The verification of condition $E^{\prime}$ follows using the same arguments presented in Section 6 of \cite{nrs}.
Denote by $\mathcal{Z}_{t_0}^{(t_0 + \epsilon)_T}$ the subset of paths in $\mathcal{Z}_{t_0}$ which start before or at time $t_0$ and truncated before time $(t_0 + \epsilon)$. By Lemma \ref{density} we get the locally finiteness of $\mathcal{Z}_{t_0}(t_0 + \epsilon)$ for any $\epsilon > 0$ (for more details see \cite{finr1}, \cite{nrs}). The key point in the verification of condition $E^{\prime}$ is to note that for 
any $0 < \epsilon < t, \ E[\hat{\eta}_{\mathcal{Z}_{t_0}}(t_0,t;a,b)] \le E[\hat{\eta}_{\mathcal{Z}_{t_0}^{(t_0 + \epsilon)_T}}(t_0 + \epsilon,t - \epsilon;a,b)]$. Then, by the locally finiteness of $\mathcal{Z}_{t_0} (t_0 + \epsilon)$ and condition $I$ we get that $\mathcal{Z}_{t_0}^{(t_0 + \epsilon)_T}$ is distributed as coalescing Brownian motions starting from the random set $\mathcal{Z}_{t_0}(t_0 + \epsilon)$. We get condition $E^{\prime}$ from the fact that a family of coalescing Brownian motions  starting from a random locally finite subset of the real line is stochastically dominated by the family of paths of coalescing Brownian motions starting at every point in $\mathbb{R}$ at time $t_0 + \epsilon$ and taking limit when $\epsilon$ goes to zero. See Lemma 6.3 in \cite{nrs} that also holds in our case.

\medskip

\subsection{Estimates on the distributions of the coalescing times} 
\label{subsec:coaltimes}

We start this section proving Lemma \ref{prop:coaltime2} assuming that Proposition \ref{prop:coaltime1} holds. 
We are going to follow closely the proof of proposition 2.4 in \cite{nrs} aiming at making clear that their
arguments holds in our case. Their first estimate is
$$
\textrm{P} \Big( \nu_k (\delta^{-1} u) < \tau_k \wedge ( \delta^{-2} t ) \Big) < C^\prime(t,u) \, |k| \, \delta
$$
holds for $\delta$ sufficiently small. It follows from the following estimate based on the strong Markov Property,  which in 
our case also holds by the independence of the increments,
$$
\textrm{P} (\tau_k > t) \ge \, \textrm{P} \Big( \nu_k (\delta^{-1} u) < \tau_k \wedge ( \delta^{-2} t ) \Big) \,
\inf_{l \in \mathbb{Z}} \textrm{P} \Big( Y^l \in \textrm{B}^l (\delta^{-1}u,\delta^{-2}t) \Big)  \, ,
$$
where $\textrm{B}^l (x,t)$ is the set of trajectories that remain in the interval $[l-x,l+x]$ during the time interval $[0,t]$.
For every $u>0$ and $t>0$, write
$$
\inf_{l \in \mathbb{Z}} \textrm{P} \Big( Y^l \in \textrm{B}^l (\delta^{-1}u,\delta^{-2}t) \Big) =
1 - \sup_{l \in \mathbb{Z}} \textrm{P} \Big( \sup_{s \le \delta^{-2} t} |Y^l_s - l| > \delta^{-1} u \Big) \, .
$$
Then
\begin{eqnarray*}
\limsup_{\delta \ra 0}  \sup_{l \in \mathbb{Z}} \textrm{P} \Big( \sup_{s \le \delta^{-2} t} |Y^l_s - l| > \delta^{-1} u \Big)
& \le & \limsup_{\delta \ra 0} \textrm{P} \Big( \sup_{s \le \delta^{-2} t} ( |X^0_s| + |X^l_s| ) > \delta^{-1} u \Big) \\
& \le & 2 \limsup_{\delta \ra 0} \textrm{P} \Big( \sup_{s \le \delta^{-2} t} |X^0_s|  > \frac{\delta^{-1} u}{2} \Big) \\
& \le & 4 \, \textrm{P} \Big( N > \frac{u}{2\sqrt{t}} \Big)  \le 4 \, e^{-\frac{u^2}{8t}} \, .
\end{eqnarray*}
where $N$ is a standard normal random variable, the last inequality is a consequence of Donsker Invariance Principle, see 
Lemma 2.3 in \cite{nrs}. Therefore we have that Lemma \ref{prop:coaltime2}.

\medskip

The remain of the section is entirely devoted to the proof of Proposition \ref{prop:coaltime1}. It suffices to consider the case in which $k =1$, see the proof of Lemma 2.2 in \cite{nrs}. Here we will considering that $q(\cdot)$ has finite second moment except near the end of the section, in the proof of condition (iii) in Lemma \ref{lemma:coupling}, where we need $q(\cdot)$ with finite range. The proof is based on a proper representation of the process $(Y^1_n)_{n\ge 1}$ which has already been used in \cite{cfd}. However, in our case, the possibility to have $Y_n^1$ assuming negative values, due to crossing paths, requires a new approach. To simplify notation, we write $\tau = \tau_1$. 

\smallskip

By Skorohod Representation Theorem, see section 7.6 in \cite{Du}, there exist a standard Brownian Motion $(B(s))_{s\ge 0}$ and stopping times $T_1$, $T_2$, ..., such that $B(T_n)$ has the same distribution of $Y^1_n$, for $n \ge 0$, where $T_0 = 0$. Furthermore,
the stopping times $T_1$, $T_2$, ..., have the following representation:
$$
T_n = \inf \big\{ s \ge T_{n-1} : B(s) - B(T_{n-1}) \notin \big( U_n(B(T_{n-1})),V_n(B(T_{n-1})) \big) \big\} \, ,
$$
where $\{(U_n(m),V_n(m)): n \ge 1, \, m \in \mathrm{Z} \}$ is a family of independent random vectors taking values in
$\{(0,0)\} \cup \{...,-2,-1\} \times \{1,2,...\}$. Also, by the definition of $Z$, we have that for each fixed $m \in \mathbb{Z}$, $(U_n(m),V_n(m))$, $n \ge 1$, are identically distributed. Indeed, write
$$
\xi^z_n = Z^z_n - Z^z_{n-1}
$$
for the increments of $Z^z$, then $\xi^{(z_1,z_2)}_n = \xi^{(z_1,z_2+(n-1))}_1$ and the families of identically distributed (but dependent) random variables $(\xi^{(z_1,z_2)}_1)_{z_1 \in \mathbb{Z}}$, $z_2 \in \mathbb{Z}$, are independent of each other. 
Moreover
$$
|U_1(m)| \le |\xi^0_1| + |\xi^m_1| \, .
$$
Therefore, by Proposition \ref {prop:finmom}, $|U_1(m)|$ has finite second moment uniformly
bounded in $m \in \mathbb{Z}$. Also note that, by symmetry, for $m > 0$, $|U_1(m)|$ has the same 
distribution as $V_1(-m)$. 

\smallskip

Let us give a brief description of the idea behind the proof of Proposition \ref{prop:coaltime1}. By similar arguments as those of \cite{cfd} we show that $P(T_t \le \zeta t)$ is of order $1/\sqrt{t}$, for some $\zeta > 0$ sufficiently small. On the other hand if $T_t \ge \zeta t$, then for $t$ large, we show that $\zeta t$ is enough time for the Brownian excursions to guarantee that $(Y_n^1)_{0\le n \le t}$ have sufficient sign changes such that, under these changes, the probability that $(Y_n^1)_{0\le n \le t}$ does not hit $0$ is of order $1/\sqrt{t}$.  

\medskip 

Now define the random discrete times $a_0 = 0$, 
$$
a_1 = \inf\{ n \ge 1: Y^1_n \le 0 \}
$$
and for $j \ge 2$
$$
a_j = \left\{
\begin{array}{ll}
\inf\{ n \ge a_{j-1} : Y^1_{n} \ge 0 \} , \ j \textrm{ even}, \\
\inf\{ n \ge a_{j-1} : Y^1_{n} \le 0 \} , \ j \textrm{ odd}.
\end{array}
\right.
$$
Then $(a_j)_{j\ge 1}$ is a increasing sequence of stopping times for the random walk $(Y^1_n)_{n\ge 1}$, which is strictly increasing until $j$ such that $a_j = \tau$. Note that $V_{a_j} ( Y^1_{a_{j-1}} ) \ge - Y^1_{a_{j-1}}$ if $j$ is even and $U_{a_j} ( Y^1_{a_{j-1}} ) \le - Y^1_{a_{j-1}}$ if $j$ is odd. 
The sequence $(a_j)_{j\ge 1}$ induce the increasing sequence of continuous random times $(T_{a_j})_{j\ge 1}$.

\smallskip


We have the inequality
\begin{eqnarray} \label{ineqct}
\textrm{P} ( \tau > t) &\le& \textrm{P}( T_t < \zeta t) + \textrm{P}( \tau > t \, , \, T_t \ge \zeta t ) \nonumber \\
&\le&  \textrm{P}( T_t \le \zeta t) + \sum_{n=1}^{t} 
\textrm{P}( \tau > t \, , \,  T_t \ge \zeta t \, , \, T_{a_{l-1}} < \zeta t \, , \, T_{a_{l}} \ge \zeta t ) \, .
\end{eqnarray}
where $a_0 := 0$. We are going to show that both probabilities in the rightmost side of (\ref{ineqct}) are of order $1/\sqrt{t}$. During the proof we will need some technical lemmas whose proofs are postponed to the end of this section.

\medskip

We start with $\textrm{P}( T_t \le \zeta t)$. Note that we can write
$$
T_{t}= \displaystyle\sum_{i=1}^{t} ( T_i - T_{i-1} ) = \displaystyle\sum_{i=1}^{t}S_{i}(Y^1_{i-1}),
$$
where $(S_{i}(k),\;i\geq1,\;k\in \mathbb{Z})$ are independent random variables.
The sequences $\;(S_{i}(k),\;i\geq1)$, indexed by $k\in \mathbb{Z}$, are not identically distributed.
Fix $\lambda > 0$. By the Markov inequality we have
$$
\textrm{P}( T_t \le \zeta t) = \textrm{P}(e^{- \lambda T_t} \ge e^{- \lambda \zeta t}) \le e^{\lambda \zeta t} \textrm{E}\left(e^{- \lambda T_t}\right).
$$
As in \cite{cfd}, put $S(m) = S_1(m)$, then we have that
\begin{equation}
\label{eq:zeta1}
\textrm{P}( T_t \le \zeta t) \le \left[e^{\lambda \zeta} \sup_{m \in \mathbb{Z}} \textrm{E}\left(e^{-\lambda S(m)}\right)\right]^t .
\end{equation}
To estimate the expectation above, we need some uniform estimates on the distribution of $U_1(m)$ and $V_1(m)$. 

\begin{lemma} \label{p00}
For every $p<1$ and every probability function $q:\mathbb{N} \ra [0,1]$, we have that 
\begin{equation}
\label{eq:c_1}
0< c_1 := \sup_m P( (U_1(m),V_1(m)) = (0,0) ) < 1.
\end{equation}
\end{lemma}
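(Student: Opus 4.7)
The event $\{(U_1(m), V_1(m)) = (0, 0)\}$ in the Skorohod embedding corresponds to the two coupled random walks at horizontal distance $m$ performing identical one-step horizontal jumps; by the translation invariance of the environment, its probability equals $\mu_m := \textrm{P}(\xi^{(0)}_1 = \xi^{(m)}_1)$, where $\xi^{(a)}_1 := Z^{(a, 0)}_1 - a$ denotes the horizontal increment of the walk started at $(a, 0)$. (The supremum is effectively over $m \ne 0$, since once the walks coalesce $\mu_0 = 1$ trivially.) Positivity of $c_1$ will be immediate: setting $k^\ast := \min\{k : q(k) > 0\}$, for any fixed $m \ne 0$ and when $q(1) > 0$, the event $\{\omega((0,1)) = \omega((m, 1)) = 1,\ \zeta((0, 0)) = \zeta((m, 0)) = 1\}$ has probability $p^2 q(1)^2$ and forces both jumps to be $0$; when $q(1) = 0$, an analogous local $\omega$-configuration around $(0, 1)$ and $(m, 1)$ placing the $k^\ast$-th nearest open site above each start at the same relative offset covers $k^\ast > 1$.

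For the upper bound $c_1 < 1$ I split by the magnitude of $|m|$. For large $|m|$ let $A_R^{(a)}$ be the event that there are at least $\zeta((a, 0))$ open sites in the horizontal strip $\{(j, 1) : |j - a| \le R\}$; on $A_R^{(a)}$ the jump $\xi^{(a)}_1$ is a function of $\omega$ restricted to that strip together with $\zeta((a, 0))$ and $\theta((a, 0))$. Thus for $|m| > 2R$ the two strips are disjoint and $\xi^{(0)}_1 \mathbf{1}_{A_R^{(0)}}$ is independent of $\xi^{(m)}_1 \mathbf{1}_{A_R^{(m)}}$, which yields
\[
\mu_m \le \sum_z \textrm{P}(\xi^{(0)}_1 = z,\, A_R^{(0)})\, \textrm{P}(\xi^{(m)}_1 = z,\, A_R^{(m)}) + 2\, \textrm{P}((A_R^{(0)})^c) \le \sum_z \tilde q(z)^2 + 2\, \textrm{P}((A_R^{(0)})^c).
\]
Since $\zeta$ is a.s.\ finite and $p \in (0, 1)$, $\textrm{P}((A_R^{(0)})^c) \to 0$ as $R \to \infty$, and $\tilde q$ is not a Dirac mass (because $p \in (0, 1)$ forces the walk to take at least two distinct horizontal jumps with positive probability), so $\sum_z \tilde q(z)^2 < 1$. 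Choosing $R$ sufficiently large then produces a constant $\rho < 1$ with $\sup_{|m| > 2R} \mu_m \le \rho$.

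It remains to handle the finitely many $m$ with $0 < |m| \le 2R$. For each such $m$ I exhibit a finite configuration of $(\omega, \zeta, \theta)$ of positive probability on which the two jumps disagree: for instance, with $q(1) > 0$ and $m > 0$, the event $\{\omega((0, 1)) = 1,\ \omega((j, 1)) = 0 \text{ for all } j \in \{1, \ldots, 2m\},\ \zeta((0, 0)) = \zeta((m, 0)) = 1\}$ forces $\xi^{(0)}_1 = 0$ and $\xi^{(m)}_1 = -m$; when $q(1) = 0$ an analogous construction with the rank-$k^\ast$ nearest open site and, if needed, tie-breaking by $\theta$ works. Taking the minimum over this finite collection of positive probabilities and combining with the large-$|m|$ bound gives $c_1 < 1$. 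The main obstacle will be precisely this small-$|m|$ case analysis when $k^\ast > 1$ and the environment neighborhoods of $(0, 1)$ and $(m, 1)$ overlap: the $\omega$-configurations forcing distinct jumps must then be chosen consistently, though each resulting event remains a routine positive-probability finite check.
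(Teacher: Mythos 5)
Your reduction of the event to $\{\xi^{(0)}_1=\xi^{(m)}_1\}$ (with the supremum taken over $m\neq 0$) is exactly the paper's starting point, and your positivity argument is essentially theirs: exhibit one positive-probability local configuration forcing the two increments to agree. (The paper is more careful than you are when $q(1)=0$ and $|m|$ is small, splitting into $m<\lfloor k^\ast/2\rfloor$ and $m\ge\lfloor k^\ast/2\rfloor$ so that the forced $\omega$-values are consistent; but since $c_1>0$ only requires a single $m$, you could simply take $m>2k^\ast$ and avoid the overlap altogether.) For the upper bound your route is genuinely different. The paper writes down one explicit environment configuration, uniform in $m$, on which the increments disagree, which yields $1-c_1\ge\tfrac12 q(k^\ast)^2p^{3k^\ast}$ in one stroke. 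You instead decouple the walks at large separation via the localization events $A_R^{(a)}$, getting $\mu_m\le\sum_z\tilde q(z)^2+2\,\textrm{P}((A_R^{(0)})^c)$ for $|m|>2R$ (and indeed $\sum_z\tilde q(z)^2\le\max_z\tilde q(z)<1$, since $\tilde q(0)=p\,q(1)<1$ and $\tilde q(z)=\tilde q(-z)\le 1/2$ for $z\neq 0$), then reduce $0<|m|\le 2R$ to a finite check. Your argument is softer and more portable, at the price of a non-explicit constant and a residual case analysis; the paper's is sharper but relies on getting one uniform configuration right. The only place you stop short is the finite check when $k^\ast>1$ and the two neighborhoods overlap. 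It is indeed routine, and the cleanest way to close it is to force one-step coalescence: for $m>0$, on the event $\{\zeta((0,0))=\zeta((m,0))=k^\ast\}$ close the level-one sites at positions $-(m+k^\ast),\dots,m$ and open those at $m+1,\dots,m+k^\ast$; then both walks land at $m+k^\ast$, so $\xi^{(m)}_1-\xi^{(0)}_1=-m\neq 0$, and the event has probability $q(k^\ast)^2(1-p)^{2m+k^\ast+1}p^{k^\ast}>0$. With that supplied your proof is complete.
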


\smallskip

Let $S_{-1,1}$ be the exit time of interval $(-1,1)$ by a standard Brownian motion. We have that
\begin{eqnarray}
\label{eq:zeta2}
\textrm{E}\left(e^{-\lambda S(m)}\right) & = & \textrm{E}\left(e^{-\lambda S(m)} \big| (U(m),V(m)) \neq (0,0) \right) \textrm{P}\left((U(m),V(m)) \neq (0,0)\right) \nonumber \\
& & + \ \textrm{P}\left((U(m),V(m)) = (0,0)\right) \nonumber \\
& \le & \textrm{E}\left(e^{-\lambda S_{-1,1}} \right) \left(1-\textrm{P}\left((U(m),V(m)) = (0,0)\right)\right) \nonumber \\
& & + \ \textrm{P}\left((U(m),V(m)) = (0,0)\right) \nonumber \\
& = & \textrm{P}\left((U(m),V(m)) = (0,0)\right)(1-c_2) + c_2 \nonumber \\
& \le & c_1 (1-c_2) + c_2 
\end{eqnarray}
where $c_1 < 1$ is given by Lemma \ref{p00} and $c_2 = \textrm{E}\left(e^{-\lambda S_{-1,1}} \right) < 1$. 
Now, choose $\zeta$ such that $c_3 = e^{\lambda \zeta} [ c_1 (1-c_2) + c_2] < 1$, then from (\ref{eq:zeta1}) and (\ref{eq:zeta2})
we have 
$$
\textrm{P}( T_t \le \zeta t) \le c_3^t, 
$$
where $c_3 > 0$. Finally, choose $c_4 > 0$ such that $c_3^t \le \frac{c_4}{\sqrt{t}}$ and
\begin{equation}\label{2}
\textrm{P}( T_t \le \zeta t) \le \frac{c_4}{\sqrt{t}} \, ,
\end{equation}
for $t>0$.

\medskip

It remains to estimate the second term in the rightmost side of (\ref{ineqct}) which is 
\begin{equation}
\label{eq:TJestimate1}
 \sum_{l=1}^{t} \textrm{P}( \tau > t \, , \,  T_t \ge \zeta t \, , \, T_{a_{l-1}} < \zeta t \, , \, T_{a_{l}} \ge \zeta t ) \, .
\end{equation}
We need the following result:

\begin{lemma}
\label{lemma:coupling}
There exist independent square integrable random variables $\tilde{R_0}$, $R_j$, $j \ge 1$,
such that:
\begin{enumerate}
\item[(i)] $\{R_j\}_{j=1}^\oo$ are iid random variables;
\item[(ii)] $R_j | \{ R_j \neq 0 \}$ has the same distribution as $\tilde{R}_0$;
\item[(iii)] $c_5 := P(R_{1} \neq 0) < 1$;
\item[(iv)] $T_{a_j}$ is stochastically dominated by $J_j$ which is defined as $J_0 = 0$, 
$$
J_1 = \inf \big\{ s \ge 0 : B(s) - B(0) = (-1)^j ( R_{1} + \tilde{R}_0 ) \big\} \, ,
$$
and
$$
J_j = \inf \big\{ s \ge J_{j-1} : B(s) - B(J_{j-1}) = (-1)^j ( R_{j} + R_{j-1} ) \big\} \, , \ j \ge 2 \, ,
$$
where $(B(s))_{s\ge 0}$ is a standard Brownian motion independent from the sequence $\{R_n\}_{n=1}^\oo$;
\item[(v)] $Y_{a_j} \neq 0$ implies $B(J_j) \neq 0$ which is equivalent to $R_{j} \neq 0$ given that $B(0) = \tilde{R}_0$.
\end{enumerate}

\end{lemma}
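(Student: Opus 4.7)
The plan is to build, for the walk $Y^1$, an iid sequence $(R_j)_{j \geq 1}$ that uniformly dominates the overshoot magnitudes $|Y^1_{a_j}|$, and then to use the strong Markov property of the Skorohod Brownian motion to bound the Skorohod time $T_{a_j}$ by the Brownian hitting time $J_j$.

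First I would construct the $R_j$'s. The key ingredient is that the jump variables $(U_n(m), V_n(m))$, although their law depends on the starting position $m$, are independent across $n$ for each fixed $m$ and are $L^2$-bounded uniformly in $m$ (this uniform bound follows from Proposition \ref{prop:finmom} together with the observation $|U_1(m)| \leq |\xi^0_1| + |\xi^m_1|$ recorded just before the lemma statement). Via a coupling that introduces auxiliary independent randomness, I would produce iid $R_j$'s from a common integer-valued distribution with finite second moment, such that pathwise $R_j \geq |Y^1_{a_j}|$, with $R_j = 0$ exactly when the walk lands on zero at step $a_j$. Setting $\tilde{R}_0 := 1 = |Y^1_0|$, this immediately yields (i)--(iii), and yields one half of (v) since $Y^1_{a_j} \neq 0 \Rightarrow |Y^1_{a_j}| \geq 1 \Rightarrow R_j \neq 0$.

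Second, for the stochastic domination $T_{a_j} \preceq J_j$ in (iv), I would proceed inductively, invoking the strong Markov property of the Skorohod BM $B$ at the stopping times $T_{a_k}$. Between $T_{a_{k-1}}$ and $T_{a_k}$, the BM starts at $(-1)^{k-1}|Y^1_{a_{k-1}}|$ and must reach $(-1)^k|Y^1_{a_k}|$ through the Skorohod scheme. By a coupling of the Skorohod BM with an auxiliary standard BM, I would show that $T_{a_k} - T_{a_{k-1}}$ is stochastically dominated by the first hitting time of level $(-1)^k(R_k + R_{k-1})$ by an independent BM starting at the origin. Iterating recursively and exploiting the independence of the $R_j$'s yields the full domination of $T_{a_j}$ by $J_j$. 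The second half of (v), namely that $B(J_j) \neq 0 \Leftrightarrow R_j \neq 0$, follows from the telescoping identity $B(J_j) = \tilde{R}_0 + \sum_{i=1}^j (-1)^i(R_i + R_{i-1}) = (-1)^j R_j$ (with the convention $R_0 = \tilde{R}_0$).

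The main obstacle is the per-step domination $T_{a_k} - T_{a_{k-1}} \preceq J_k - J_{k-1}$. The difficulty is that $T_{a_k} - T_{a_{k-1}}$ is a sum of several Skorohod exit times, one for each walk step $n$ in the range $a_{k-1} < n \leq a_k$, whereas $J_k - J_{k-1}$ is a single Brownian hitting time. The crucial observation is that between consecutive sign changes the walk stays on one side of zero, so the Skorohod scheme can be coupled with a single Brownian excursion that traverses the combined distance $R_{k-1} + R_k$ in the appropriate direction; formalising this coupling, using the boundedness (in $L^2$) of the Skorohod interval lengths together with continuity of $B$, is the technical heart of the argument.
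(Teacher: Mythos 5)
Your high-level plan (dominate the overshoots $|Y^1_{a_j}|$ by iid square-integrable variables and convert the Skorohod times into Brownian hitting times via an auxiliary independent Brownian motion) is the same as the paper's, but you have passed over the two points that carry essentially all of the difficulty. First, item (iii) does not follow ``immediately'' from arranging $R_j=0$ exactly when the walk lands on zero: to have the $R_j$ simultaneously iid, pathwise dominating $|Y^1_{a_j}|$, and equal to $0$ with positive probability, you need a lower bound, \emph{uniform in the distance $m$ between the walks}, on the probability that two walks which cross each other actually coalesce, namely
$$
\inf_{m>0} P\big( Z^m_1 - Z^0_1 = 0 \;\big|\; Z^m_1 - Z^0_1 \le 0 \big) > 0 .
$$
This is the crux of the lemma; the paper proves it by a combinatorial switching argument (changing the occupancies of at most $L$ sites), and it is the single place in the entire paper where the finite-range hypothesis on $q(\cdot)$ is used. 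Your proposal never confronts it. Second, the uniform $L^2$ bound on $U_1(m)$, which you cite as the key ingredient, does not by itself produce a square-integrable stochastic dominator: from $\sup_m \mathrm{E}[U_1(m)^2]\le C$ one only gets $\sup_m P(|U_1(m)|\ge x)\le C/x^2$, and the envelope law with tail $\min(1,C/x^2)$ has \emph{infinite} second moment. Because $Z^0$ and $Z^1$ are dependent, $Y^1$ is spatially inhomogeneous, and the paper must build the dominator explicitly from the environment: on $\{Z^0_n=k,\,Z^1_n=m+k\}$ it introduces the gap variables $\eta,\tilde\eta,\hat\eta$ (each distributed as a sum of $N$ geometrics with $N\sim q(\cdot)$), sets $\mathcal{R}=-(\eta+\tilde\eta+\hat\eta)$, and then sums a geometric number of such terms to account for the geometrically bounded number of steps in $(a_{j-1},a_j]$ at which the Brownian excursion can visit $(-\infty,0]$. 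That explicit construction is what your ``coupling with auxiliary randomness'' would have to be, and it is missing.

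A smaller but concrete error: you set $\tilde R_0:=1$. Item (ii) requires $R_j\,|\,\{R_j\ne 0\}$ to have the same law as $\tilde R_0$; since $R_j$ must dominate the unbounded quantity $|Y^1_{a_j}|$, it cannot be $\{0,1\}$-valued, so $\tilde R_0$ cannot be the constant $1$. The paper instead defines $\tilde R_0$ to have the law of $R_1\,|\,\{R_1\ne 0\}$ and only uses $\tilde R_0\ge 1=|Y^1_0|$ for the domination at the first step. Finally, the step you single out as the technical heart (collapsing the sum of per-step Skorohod exit times into a single hitting time) is comparatively painless in the paper, precisely because $[T_{a_{j-1}},T_{a_j}]$ is one continuous stretch of the same Brownian path; the work there is only to dominate the endpoint displacements by $R_{j-1}+R_j$ and compare with the independent motion $\mathbb{B}$. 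The real content of the proof lies in (iii) and in the construction of $\mathcal{R}$.
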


\medskip

Let $(J_j)_{j\ge 1}$ be as in the statement of Lemma \ref{lemma:coupling}. Since 
$$
\big\{ \tau > t \, , \,  T_t \ge \zeta t \, , \, T_{a_{l-1}} < \zeta t \, , \, T_{a_{l}} \ge \zeta t \big\}
$$
is a subset of
$$
\big\{ Y_{a_j} \neq 0 \textrm{ for } j=1,...,l-1 \, , \, T_{a_{l}} \ge \zeta t \big\} \, ,
$$
by Lemma \ref{lemma:coupling}, we have that (\ref{eq:TJestimate1}) is bounded above by
\begin{eqnarray}
\label{eq:TJestimate2}
& & \sum_{l=1}^{t} \textrm{P}( B(J_j) \neq 0 \textrm{ for } j=1,...,l-1 \, , \, J_l \ge \zeta t ) \nonumber \\
& & \quad = \, \sum_{l=1}^{t} \textrm{P}( R_{j} \neq 0 \textrm{ for } j=1,...,l-1 \, , \, J_l \ge \zeta t ) \, .
\end{eqnarray}
For the right hand side above, write
\begin{eqnarray}
\label{eq:TJestimate3}
\lefteqn{\textrm{P}( R_{j} \neq 0 \textrm{ for } j=1,...,l-1 \, , \, J_l \ge \zeta t ) \, = } \nonumber \\
& & = \, \textrm{P}( R_{j} \neq 0 \textrm{ for } j=1,...,l-1 ) \, \textrm{P}( J_l \ge \zeta t | R_{j} \neq 0 \textrm{ for } j=1,...,l-1 )
\nonumber \\
& & = \, \textrm{P}( R_{1} \neq 0 )^{l-1} \, \textrm{P}( J_l \ge \zeta t | R_{j} \neq 0 \textrm{ for } j=1,...,l-1 ) \nonumber \\
& & = \, c_5^{l-1} \, \textrm{P}( J_l \ge \zeta t | R_{j} \neq 0 \textrm{ for } j=1,...,l-1 ) \, .
\end{eqnarray}
where the last equality follows from the independence of the $R_j$'s. Now put $\tilde{R}_{j} = R_j | \{ R_j \neq 0 \}$ and define
$\tilde{J}_0 = 0$ and
$$
\tilde{J}_j = \inf \big\{ s \ge \tilde{J}_{j-1} : B(s) - B(\tilde{J}_{j-1}) = (-1)^j ( \tilde{R}_{j} + \tilde{R}_{j-1} ) \big\} \, .
$$
Then $\tilde{R}_{j}$ is also a sequence of iid square integrable random variables and, from Lemma \ref{lemma:coupling}, we get that
\begin{equation}
\label{eq:TJestimate4}
\textrm{P}( J_l \ge \zeta t | R_{j} \neq 0 \textrm{ for } j=1,...,l-1 ) = \textrm{P}( \tilde{J}_l \ge \zeta t ) \, .
\end{equation}
To estimate the right hand side in the previous equality, write
$$
W_1 = \tilde{J}_1 \quad \textrm{and} \quad W_j = \tilde{J}_j - \tilde{J}_{j-1} \textrm{ for } j \ge 1 \, .
$$
Then it is easy to verify that $W_j$, $j \ge 1$, are identically distributed random variables which are
not independent. However $\{W_{2j}\}_{j \ge 1}$ and $\{W_{2j-1}\}_{j \ge 1}$ are families of iid random variables.

We need the following estimate on the distribution function of $W_j$: 

\begin{lemma} \label{tailbehavior}
Let $W_1$ be defined as above. Then, there exists a constant $c_6 > 0$ such that for every $x > 0$ we have that
$$
\textrm{P}( W_1 \ge x ) \le \frac{c_6}{\sqrt{x}}.
$$ 
\end{lemma}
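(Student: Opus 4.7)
The plan is to recognize $W_1 = \tilde J_1$ as the first passage time of a standard Brownian motion to a random level whose absolute value has finite mean, and then to average the classical Gaussian hitting-time tail bound against the law of that level.

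First, I would unfold the definition. From the construction preceding the lemma,
$$W_1 = \inf\bigl\{s \ge 0 : B(s) - B(0) = -(\tilde R_1 + \tilde R_0)\bigr\},$$
where $B$ is a standard Brownian motion independent of the pair $(\tilde R_0, \tilde R_1)$. Setting $A := \tilde R_0 + \tilde R_1$, the variable $W_1$ is exactly the hitting time $\tau_{-A}$ of the level $-A$ by the standard Brownian motion $s \mapsto B(s) - B(0)$.

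Next, I would invoke the reflection principle, which for a deterministic level $a \in \mathbb{R}$ yields
$$\textrm{P}(\tau_a > x) = 2\Phi(|a|/\sqrt{x}) - 1 = 2\int_0^{|a|/\sqrt{x}} \phi(u)\,du \le 2\phi(0)\,\frac{|a|}{\sqrt{x}} = \sqrt{\tfrac{2}{\pi}}\,\frac{|a|}{\sqrt{x}},$$
where $\phi$ and $\Phi$ are the standard normal density and distribution function. Conditioning on $A$ and using the independence of $B$ from $A$,
$$\textrm{P}(W_1 \ge x) = \textrm{E}\bigl[\textrm{P}(\tau_{-A} \ge x \mid A)\bigr] \le \sqrt{\tfrac{2}{\pi}}\,\frac{\textrm{E}|A|}{\sqrt{x}} \le \sqrt{\tfrac{2}{\pi}}\,\frac{\textrm{E}|\tilde R_0| + \textrm{E}|\tilde R_1|}{\sqrt{x}}.$$

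Finally, Lemma \ref{lemma:coupling} guarantees that $\tilde R_0$ is square integrable and that $\tilde R_1$ is equidistributed with $\tilde R_0$, hence $\textrm{E}|\tilde R_1| = \textrm{E}|\tilde R_0| < \infty$, and I obtain the desired bound with $c_6 := 2\sqrt{2/\pi}\,\textrm{E}|\tilde R_0|$. I do not anticipate a serious obstacle here: the argument reduces to the standard Brownian hitting-time estimate combined with the $L^1$ control on the random target that is already built into the coupling lemma.
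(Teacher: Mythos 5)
Your proposal is correct and follows essentially the same route as the paper: both condition on the value of the random target level $\tilde R_0+\tilde R_1$, use the fact that the tail of the Brownian first-passage time to a level $a$ is bounded by a constant times $|a|/\sqrt{x}$ (you via the reflection principle, the paper via the explicit first-passage density), and then average against the law of the level using the integrability of $\tilde R_0$ guaranteed by Lemma \ref{lemma:coupling}. No gap here.
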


\smallskip

By Lemma \ref{tailbehavior}, it follows that
\begin{equation}
\label{eq:TJestimate5}
\textrm{P} \left( \tilde{J}_{l} \ge \zeta t \right) \le \textrm{P} \left( \sum_{j=1}^{l} W_{j} \ge \zeta t \right) 
\le l \, \textrm{P} \left( W_1 \ge \frac{\zeta t}{l} \right)
\le \frac{c_7 \, l^{\frac{3}{2}}}{\sqrt{t}}, 
\end{equation}
where $c_7 = c_6 / \sqrt{\zeta}$. From (\ref{eq:TJestimate2}), (\ref{eq:TJestimate3}), (\ref{eq:TJestimate4}) and (\ref{eq:TJestimate5}),
we have that 
\begin{equation}
\label{eq:TJestimate6}
\sum_{l=1}^{t} \textrm{P}( B(J_j) \neq 0 \textrm{ for } j=1,...,l-1 \, , \, J_l \ge \zeta t ) 
\, \le \, \frac{c_7}{\sqrt{t}} \sum_{l=1}^{+ \oo} c_5^l l^{\frac{3}{2}} \, \le \, \frac{c_8}{\sqrt{t}} \, .
\end{equation}

\medskip

Back to (\ref{ineqct}), using (\ref{2}) and (\ref{eq:TJestimate6}), we have shown that
$$
\textrm{P} ( \tau > t) \le \frac{c_4 + c_8}{\sqrt{t}} 
$$
finishing the proof of Proposition \ref{prop:coaltime1}.

\medskip

\subsubsection{Proofs of the technical lemmas of section \ref{subsec:coaltimes}}

Here we prove Lemmas \ref{p00}, \ref{lemma:coupling} and \ref{tailbehavior}.

\medskip

\noindent \textit{Proof of Lemma \ref{p00}.}
Since $(U_1(m),V_1(m))$, $m\ge 1$ takes values in $\{(0,0)\} \cup \{...,-2,-1\} \times \{1,2,...\}$,
then we have that $(U_1(m),V_1(m)) = (0,0)$ if and only if  $Y^m_1 = Y^m_0$, which means that 
$Z^m_1 - Z^0_1 = Z^m_0 - Z^0_0 = m$. 

Fix $r = \inf\{k\ge 1: q(k)>0\}$.  Then 
\begin{eqnarray*}
\lefteqn{\textrm{P}( (U_1(m),V_1(m)) \neq (0,0) ) \ge \textrm{P} ( Z^m_1 < Z^0_1 + m ) }\\ 
& & \ge \textrm{P} ( Z^0_1 = k , Z^m_1 < m + k ) \\
& & \ge \textrm{P}( \zeta(0) = \zeta(m) = k , \omega(1-j) = 0, \ \omega(j)=\omega(m+j-1)=1, \ j=1,...,k )\\
& & \ge \frac{1}{2} q(k)^2 p^{3k} > 0 \, .
\end{eqnarray*}
Since the last term in the previous inequality does not depend on $m$, we have that
$$
\sup_m \textrm{P}( (U_1(m),V_1(m)) = (0,0) ) \le 1 - \frac{1}{2} q(k)^2 p^{3k} < 1.
$$
To prove the other inequality we consider two cases. First, suppose that $m < \lfloor k/2 \rfloor$,
then
\begin{eqnarray*}
\lefteqn{\textrm{P}( (U_1(m),V_1(m)) = (0,0) ) \ge \textrm{P} ( Z^0_1 = k-1 , \, Z^m_1 = m + k -1 ) } \\
& & \qquad \ge \textrm{P} ( \zeta(0) = \zeta(m) = k , \ \omega(j)=0, \ j=m-k+1,...,m-1, \\
& & \qquad \qquad \quad \omega(j-k)=1, \ j=1,...m , \ \omega(m+j) = 1 , \ j=0,...,k ) \\
& & \qquad \ge \frac{1}{2} q(k)^2  p^{m+2k-1} \ge \frac{1}{2} q(k)^2  p^{\frac{5}{2}k} \, .
\end{eqnarray*}
Now we consider the case $m \ge \lfloor k/2 \rfloor$. In this case 
\begin{eqnarray*}
\lefteqn{\textrm{P}( (U_1(m),V_1(m)) = (0,0) ) \ge P ( Z^0_1 = \lfloor k/2 \rfloor , \, Z^m_1 = m + \lfloor k/2 \rfloor \Big) } \\
& & \qquad \ge \textrm{P} ( \zeta(0) = \zeta(m) = k , \ \omega(j) = \omega(m+j) = 1, \ j = -\lfloor k/2 \rfloor ,... , \lfloor k/2 \rfloor ) \\
& & \qquad \ge \frac{1}{4} q(k)^2  p^{2k} \, .
\end{eqnarray*}
Therefore,
$$
\sup_m \textrm{P}( (U_1(m),V_1(m)) = (0,0) ) \ge \min \Big\{ \frac{1}{2} q(k)^2  p^{\frac{5}{2}k} , \frac{1}{4} q(k)^2  p^{2k} \Big\} > 0.
$$
\begin{flushright}
$\square$
\end{flushright}

\medskip

\noindent \textit{Proof of Lemma \ref{lemma:coupling}.} Consider independent standard Brownian motions $(B(s))_{s\ge 0}$ and $(\mathbb{B}(s))_{s\ge 0}$ such that $(B(s))_{s\ge 0}$ is used in the Skorohod representation of $(Y^1_n)_{n\ge 1}$ and $(\mathbb{B}(s))_{s\ge 0}$ is also independent of $(Y^1_n)_{n\ge 1}$. In the time interval $[0,T_{a_1}]$, $(B(s))_{s\ge 0}$ makes an excursion being able to visit $(-\infty,0]$ when $U(Y^1_n) \le -Y^1_n$ for some $1 \le n \le a_1$. When this happens two things may occur:
\begin{enumerate}
\item[(1)] $U(Y^1_n) = - Y^1_n$ which implies $a_1=n$ and $B(T_{a_1}) = Y_{a_1} = 0$ meaning that $Z^0$ and $Z_1$ have coalesced; 
\item[(2)] $U(Y^1_n) < - Y^1_n$ which implies that, with probability greater than some strictly positive constant $\beta$, $a_1=n$ and $(B(s))_{s\ge 0}$ will leave the interval $[U(Y^1_n) + Y^1_n ,V(Y^1_n) + Y^1_n]$ by its left side (strong Markov property with the fact that $(B(s))_{s\ge 0}$ visits $(-\infty,0)$) meaning that $Z^0$ and $Z^1$ have crossed. 
\end{enumerate}

Denote $\mathcal{N}_1$ by the random variable that denotes the cardinality of $E := \big\{ n \in \{1,...,a_1\} : (B(s))_{s\ge 0} \textrm{ visits } (-\infty,0] \textrm{ in time interval } [T_{n-1} , T_{n}] \big\}$. Note that $\mathcal{N}_1 \ge 1$ and, from (2), $\mathcal{N}_1$ is stochastically bounded by a geometric random variable with parameter $\beta$. We will construct below a sequence of iid non positive square integrable random variables $(\mathcal{R}_n)_{n \ge 1}$ such that for each $n \in E$, $\mathcal{R}_n \le U(Y^1_n) + Y^1_n$. Define
$$
R_1 = - \sum_{n=1}^{\mathcal{G}_1} \mathcal{R}_n \, ,
$$
where $\mathcal{G}_1$ is a geometric random variable with parameter $\beta$ such that $\mathcal{G}_1 \ge \mathcal{N}_1$. It is a simple  to show that $R_1$ is square integrable and independent of $(\mathbb{B}(s))_{s\ge 0}$. Clearly from the definitions 
$$
R_1 \ge - \sum_{n=1}^{\mathcal{N}_1} \mathcal{R}_n \ge | Y_{a_1}^n | = | B(T_{a_1}) | \, .
$$
So if we take $\tilde{R}_0$ with the same distribution of $R_1 | \{R_1 \neq 0\}$ then the time for $(B(s))_{s\ge 0}$ starting at $1$ to hit $-R_1$ is stochastically dominated by the time for $(\mathbb{B}(s))_{s\ge 0}$ starting at $\tilde{R}_0$ to hit $-R_1$ which we denote $J_1$. 

\smallskip

From this point, it is straightforward to use an induction argument to build the sequence $\{R_j\}_{j\ge 1}$. At step $j$ in the induction argument, we consider the $(B(s))_{s\ge 0}$ excursion in time interval $[T_{a_{j-1}},T_{a_{j}}]$, and since $| Y_{a_{j-1}}^n | \le R_{j-1}$ we can obtain $R_j$ and define $J_j$ using $(\mathbb{B}(s))_{s\ge 0}$ as before. By the strong Markov property of $(Y_n^1)$, we obtain that the $R_j$'s are independent and properties (i), (ii), (iv) and (v) follows directly from the construction.

\medskip

It remains to prove (iii) and construct $(\mathcal{R}_n)_{n \ge 1}$. 

\bigskip

\noindent \textbf{Construction of $(\mathcal{R}_n)_{n \ge 1}$:} Fix $n \ge 1$. We have to show that there exists a square integrable random variable $\mathcal{R}$ that stochastically dominates $(U_n(m)+m)|\{U_n(m)\le -m\}$ for every $m>0$. From this it is straightforward to obtain the sequence $(\mathcal{R}_n)_{n \ge 1}$, by induction and extension of the probability space, in a way that $\mathcal{R}_n$ has the same distribution as  $\mathcal{R}$. 

\smallskip

Now fix $m>0$. We will be conditioning of the event $Y^1_n = m$. Recall that $Y^1_n = Z^1_n - Z^0_n$, $n\ge 1$. If $(Z^0_n)_{n\ge 1}$ and $(Z^1_n)_{n\ge 1}$ were independent, $(Y^1_n)_{n\ge 1}$ would be spatially homogeneous and $\mathcal{R}$ could be fixed as the sum of the absolute values of two independent increments of the processes $(Z^i_n)_{n\ge 1}$, $i=1,2$. That is not the case. 

We define $\mathcal{R} = 0$ if $U_n(m) = -m$. On the event $\{U_n(m) < -m\}$, $\mathcal{R}$ should be smallest value that the environment at time $n+1$ allows for $U_n(m)+m < 0$. So let us regard the environment at time $n+1$. Consider the partition of the sample space on the events $G_{k,m} = \{ Z^0_{n} = k \} \cap \{Z^1_{n} = m + k\}$, for $m > 0$ and $k \in \mathbb{Z}$. We define random variables
$\eta$, $\tilde{\eta}$ and $\hat{\eta}$ in the following way: On $G_{k,m}$ 
$$
\eta := (-k) + \textrm{ position of the } \zeta(Z^0_{n}) \textrm{ open site at time } n+1 \textrm{ to the right of position } k ,
$$ 
$$
\tilde{\eta} := k - \textrm{ position of the } \zeta(Z^0_{n}) \textrm{ open site at time } n+1 \textrm{ to the left of position } k , 
$$
$$
\hat{\eta} := k - \tilde{\eta} - \textrm{ position of the } \zeta(Z^1_{n}) \textrm{ open site at time } n+1 \textrm{ to the left of position } (k - \tilde{\eta}) ,  
$$
see figure $1$ below. Clearly, conditioned to each $G_{k,m}$, $\eta$, $\tilde{\eta}$ and $\hat{\eta}$ are iid whose distribution is caracterized as the sum of $N$ geometric random variables, where $N$ is distributed according to the probability function $q(\cdot)$ and is independent of the geometric random variables. In particular, $\eta$, $\tilde{\eta}$ and $\hat{\eta}$ do not depend on $k$ and $m$ and
it is straightforward to verify that they have finite second moment if $q(\cdot)$ does. Define $\mathcal{R} = - ( \eta + \tilde{\eta} + \hat{\eta})$. On the event $\{U_n(m) < -m\}$, $U_n(m)+m$ cannot be smaller than $\mathcal{R}$. Indeed on $G_{k,m}$, if the random walks $Z^0$ and $Z^1$ cross each other, we have 
$$
k - \tilde{\eta} - \hat{\eta} \le Z^1_{a_j} < Z^0_{a_j} \le k + \eta \, ,
$$
thus
$$
0 < U_n(m)+m = Y^1_{n+1} = Z^1_{a_j} - Z^0_{a_j} \ge - ( \eta + \tilde{\eta} + \hat{\eta}) = \mathcal{R} \, .
$$

\begin{figure}[!ht]
\begin{center}
\psfrag{A}{$\hat{\eta}$}
\psfrag{B}{$\tilde{\eta}$}
\psfrag{C}{$\eta$}
\psfrag{Z}{$\mathbb{Z}$}
\psfrag{d}{$\ldots$}
\psfrag{0}{$k=0$}
\psfrag{D}{Closed site}
\psfrag{E}{Open site}
\resizebox{300pt}{!}{\includegraphics{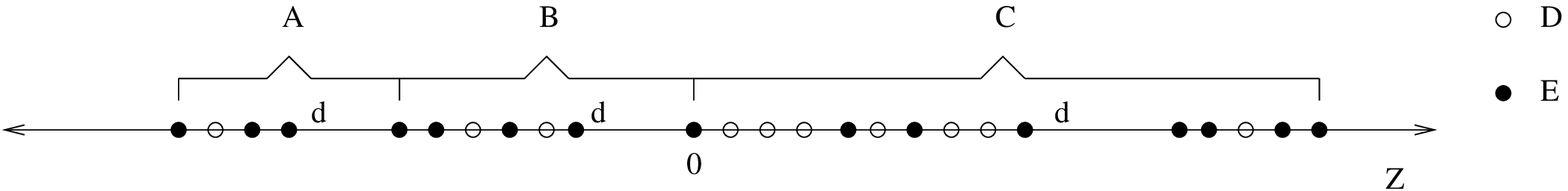}} \label{figure}

Figure $1:$ \ A realization of the random variables $\eta, \tilde{\eta}, \hat{\eta}.$
 \end{center}
\end{figure}

\begin{remark}
In figure $1$ above the interval with size $\eta$ has exactly $\zeta(Z^0_{n})$ open sites, the interval with size $\tilde{\eta}$ has $\zeta(Z^0_{n})$ open sites and finally the interval with size $\hat{\eta}$ has exactly $\zeta(Z^1_{n})$ open sites.
\end{remark}

\bigskip

\noindent \textbf{Proof of (iii):} Conditioned on the event $\{\mathcal{G}_1 = 1\}$, which occurs with probability $\beta$, we have that $B(T_{a_1}) \le 0$ and we can suppose $R_1$ equal in distribution to $\mathcal{R}$ defined above. Thus 
\begin{eqnarray*}
P(R_{1} = 0) & = & \beta \, P( \mathcal{R} = 0 | B(T_{a_1}) \le 0) \\ 
& \ge & \beta \inf_{m>0} P( U_1(m) = -m | U_1(m) \le -m , B(T_1) - B(0) = U_1(m)) \\
& = & \beta \inf_{m>0} P( Z^m_1 - Z^0_1 = 0 | Z^m_1 - Z^0_1 \le 0) \, ,
\end{eqnarray*}
where the first inequality follows from the definition of $\mathcal{R}$. So we have (ii) if
\begin{equation}
\label{eq:cruzaecoal}
\inf_{m>0} P( Z^m_1 - Z^0_1 = 0 | Z^m_1 - Z^0_1 \le 0) > 0 \, .
\end{equation}
Note that the probability above, is the probability that two walks in the GDNM at distance $m$ coalesce after their first jumps given that they cross each other.

\smallskip

Lets prove (\ref{eq:cruzaecoal}). Suppose that $q(\cdot)$ has finite range (Here is the single point in the text where we need $q(\cdot)$ with finite range).  Fix $L \ge 1$ such that $q(k) = 0$, if $k \ge L$. Since clearly $P( Z^m_1 - Z^0_1 = 0) > 0$ for every $m>0$, it is enough to show that 
$$
\inf_{m>4L} P( Z^m_1 - Z^0_1 = 0 | Z^m_1 - Z^0_1 \le 0) > 0 \, .
$$
Then fix $m>L$ and write
\begin{eqnarray}
\label{eq:dec}
P(Z^m_1 < Z^0_1) & \le & P(Z^m_1 < Z^0_1, Z^0_1 < m-L) + P(Z^m_1 < Z^0_1, Z^m_1>L) \nn \\
& & + \, P(Z^m_1 < Z^0_1, Z^m_1 \le L, Z^0_1 \ge m-L)
\end{eqnarray}
where by symmetry the left hand side is equal to
\begin{equation}
\label{eq:dec1}
2 P(Z^m_1 < Z^0_1, Z^0_1 < m-L) + P(Z^m_1 < Z^0_1, Z^m_1 \le L, Z^0_1 \ge m-L) \, .
\end{equation}
We claim that each term in the previous sum is bounded above by $C \, P(Z^m_1 = Z^0_1)$, for some $C>0$ not
depending on $m$, which implies that 
$$
P( Z^m_1 - Z^0_1 = 0 | Z^m_1 - Z^0_1 \le 0) \ge \frac{1}{3C+1}
$$
Let us consider $P(Z^m_1 < Z^0_1, Z^0_1 < m-L)$ first.. If $Z^m_1 < Z^0_1$ and $Z^0_1 < m-L$, we define 
$$
M = \sum_{j=Z^m}^{Z^0-1} \omega(j) \, .
$$ 
Clearly $M < L$ and $\sum_{j=m-L}^{m+L} \omega(k) < L$ and we can choose $j(1)< ... <j(M)) \in \{m-L,...,m+L\}$
such that $\omega(j(i)) = 0$ for $i \in \{1, ... ,M\}$ and if $\omega(j) = 0$ than $j > j(E_{k,l})$.
By changing the occupancies at sites $\{j(1), ... ,j(M)\}$ we have that the set $\{ Z^m_1 < Z^0_1, Z^0_1 < m-L \}$
has probability bounded above by 
$$
\max\Big( \Big(\frac{1-p}{p}\Big)^L , 1 \Big)  P(Z^m_1 = Z^0_1)\, .
$$
Therefore we have proved the claim for the first probability in (\ref{eq:dec1}). 

Now we estimate $P(Z^m_1 < Z^0_1, Z^m_1 \le L, Z^0_1 \ge m-L)$. Since $m>4L$,
if $Z^m_1 < Z^0_1$, $Z^m_1 \le L$ and $Z^0_1 \ge m-L$, then there exists at least $L$ open sites in the interval
$\{L,L+1,...,m-L\}$. By changing the occupancies of at most $L$ of these sites, we get a configuration with $Z^m_1 = Z^0_1$.
Thus, $P(Z^m_1 < Z^0_1, Z^m_1 \le L, Z^0_1 \ge m-L)$ is bounded above by
$$
\max\Big( \Big(\frac{1-p}{p}\Big)^L , 1 \Big)  P(Z^m_1 = Z^0_1) \, .
$$
\begin{flushright}
$\square$
\end{flushright}
\medskip

\noindent \textit{Proof of Lemma \ref{tailbehavior}.} Put $\mu := E[|\xi^0_1|]$ and $\mathcal{J}_m := \inf \{ t \ge 0 : B(t) = m \}$. We have that
\begin{eqnarray*}
\textrm{P}( W_1 \ge x ) & = & \sum_{k \ge 1} \sum_{l \ge 0} \textrm{P}( W_1 \ge x | R_1 = k, \ R_2 = l ) \textrm{P}( R_1 = k, \ R_2 = l ) \\
& = & \sum_{k \ge 1} \sum_{l \ge 0} \textrm{P}( W_1 \ge x | R_1 = k, \ R_2 = l ) \textrm{P}( R_1 = k) \textrm{P}(R_2 = l) \\
& = & \sum_{k \ge 1} \sum_{l \ge 0} \textrm{P}^0 ( \mathcal{J}_{k+l} \ge x ) \tilde{q}(k) \tilde{q}(l) \\
& = & \sum_{k \ge 1} \sum_{l \ge 0} \int_x^\oo \frac{k+l}{\sqrt{2 \pi y^3}} e^{- \frac{k+l}{2y}} dy \, \tilde{q}(k) \tilde{q}(l) \\
& \le & 2 \mu^2 \int_x^\oo \frac{1}{\sqrt{2 \pi y^3}} e^{-\frac{1}{2y}} dy  \le \frac{c_6}{\sqrt{x}} \, .
\end{eqnarray*}
\begin{flushright}
$\square$
\end{flushright}

\medskip
\subsection{Verification of condition I} 
\label{subsec:I}

\bigskip 

Condition $I$ clearly follows from the next result (see Theorem 4 in \cite{cfd} for the equivalent result in the case $q(1)=1$):

\begin{proposition}
\label{prop:coalBM}
Suppose that inequality (\ref{eqprop:coaltime}) holds and $q(\cdot)$ has finite absolute second moment. Let $(x_0,s_0)$, $(x_1,s_1)$, ... , $(x_m,s_m)$ be $m+1$ distinct points in $\mathbb{R}^2$ such that $s_0 \le s_1 \le ... \le s_m$, 
and if $s_{i-1} = s_i$ for some $i$, $i=1,...,m$, then $x_{i-1} < x_i$. Then
$$
\Big( \Big( \frac{X^{x_0 n}_{s n^{2}}}{n} \Big)_{s \ge s_0} , ... , \Big( \frac{X^{x_m n}_{s n^{2}}}{n} \Big)_{s \ge s_m} \Big)_{s \ge 0} \Longrightarrow^D \big( \big( B^{x_0}_0(s) \big)_{s \ge s_0} , ... , \big( B^{x_m}_m(s) \big)_{s \ge s_m} ) \, ,
$$
where $\big( B^{x_0}_0(s) \big)_{s \ge s_0} , ... , \big( B^{x_m}_m(s) \big)_{s \ge s_m}$ are coalescing Brownian Motions with constant diffusion coefficient $\sigma$ starting at $((x_0,s_0),...,(x_m,s_m))$. 
\end{proposition}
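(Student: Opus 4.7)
The plan is to proceed by induction on $m$, leveraging Proposition~\ref{prop:coaltime1} and Donsker's theorem. The base case $m=0$ is the classical invariance principle for the single random walk $(Z^{\lfloor x_0 n\rfloor}_n)$, whose increments are iid, mean zero, symmetric, and square-integrable with variance $\sigma^2$ by Proposition~\ref{prop:finmom}; linear interpolation and diffusive rescaling give $X^{\lfloor x_0 n\rfloor}_{sn^2}/n \Rightarrow \sigma B(s)$.

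The engine of the induction is the pair case $m=1$: for $z_1 < z_2$, the pair $(X^{\lfloor z_1 n\rfloor}/n,\,X^{\lfloor z_2 n\rfloor}/n)$ converges to a pair of coalescing Brownian motions that are independent before coalescence. Joint tightness follows coordinate-wise from Donsker. To identify the limit, I would combine two pieces. On the one hand, the difference walk $Y^{\lfloor z_2 n\rfloor - \lfloor z_1 n\rfloor}$, which is zero after coalescence, satisfies the Proposition~\ref{prop:coaltime1} bound $\mathrm{P}(\tau > n^2 t)\le C|z_2-z_1|/\sqrt{t}$; together with an invariance principle for its free (pre-coalescence) increments, this forces the limit of the rescaled difference to be a Brownian motion absorbed at $0$. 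On the other hand, Donsker pins down the marginal limit of $X^{\lfloor z_1 n\rfloor}/n$. The two together identify the joint limit uniquely as a pair of coalescing Brownian motions provided we know that, before meeting, the two walks are asymptotically independent.

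The asymptotic independence before coalescence is the main obstacle. In \cite{cfd} (the case $q(1)=1$), walks separated by more than two sites use disjoint environment cells and are literally independent, but in our model long jumps allow the two walks to inspect the same sites with positive probability. I would address this by a coupling argument: construct a second walk $\hat X^{\lfloor z_2 n\rfloor}$ driven by an independent copy of $(\omega,\theta,\zeta)$ and let $E_n$ be the event that, up to time $n^2 t\wedge \tau$, the random sets of sites inspected by $X^{\lfloor z_1 n\rfloor}$ and $X^{\lfloor z_2 n\rfloor}$ do not overlap; on $E_n$ the coupled pair agrees with the original. Since $\zeta$ has finite second moment (hence the number of sites inspected at each step has finite second moment), and since the two walks remain at distance of order $n$ until coalescence except on an event of vanishing probability controlled by Lemma~\ref{prop:coaltime2}, a union bound together with Markov's inequality shows $\mathrm{P}(E_n^c)\to 0$. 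This delivers the required pairwise independence before meeting.

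The inductive step from $m$ to $m+1$ is now routine via the strong Markov property. By iterating the pairwise coupling one shows that, before any two of the $m+1$ walks meet, they jointly converge to $m+1$ independent Brownian motions. Proposition~\ref{prop:coaltime1} together with continuity of Brownian coalescence times implies that the first meeting time of the discrete system is tight and converges to the corresponding Brownian meeting time. After this first meeting, two of the walks have coalesced and the remaining $m$ walks fall under the inductive hypothesis; concatenating these two phases via the strong Markov property yields the full joint convergence to $m+1$ coalescing Brownian motions with diffusion coefficient $\sigma$.
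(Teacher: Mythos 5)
Your overall architecture (reduction to a single starting time, induction on the number of walks, Donsker for the base case, and a coupling to an independent environment to decouple the new walk from the old ones) matches the paper's. But the central claim of your decoupling step is false, and it is exactly the point where the real work lies. You define $E_n$ as the event that the sites inspected by the two walks do not overlap \emph{up to the coalescence time} $\tau$, and assert $\mathrm{P}(E_n^c)\to 0$ because ``the two walks remain at distance of order $n$ until coalescence except on an event of vanishing probability controlled by Lemma \ref{prop:coaltime2}.'' Two walks that coalesce before time $n^2t$ (an event of probability bounded away from $0$) must pass through every intermediate distance and, at the coalescence step itself, land on the same site, so their inspected windows certainly overlap; even before that, a Green's-function count shows the difference walk spends $O(1)$ steps at distance $O(1)$ from zero, and at each such step the windows overlap with probability bounded below. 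Hence $\mathrm{P}(E_n^c)$ is bounded away from zero, not vanishing. Moreover Lemma \ref{prop:coaltime2} controls the probability that walks started at distance $O(1)$ \emph{spread} to distance $\delta^{-1}u$; it says nothing about walks started $O(n)$ apart staying $O(n)$ apart, which in any case they do not do on the event of coalescence.

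What the paper actually does, and what your argument is missing, is to maintain the independent coupling only down to a mesoscopic separation $n^{3/4}$ (large compared to typical jumps, small compared to $n$), and then to control the subsequent \emph{dependent} phase directly: by Proposition \ref{prop:coaltime1}, once two walks are within $n^{3/4}$ of each other they coalesce within $n^{7/4}$ further steps except with probability $O(n^{-1/8})$, and during those $n^{7/4}$ steps neither walk moves more than $\log(n)\,n^{7/8}=o(n)$, so the discrepancy between the true walk and its independently-evolving surrogate is $o(1)$ after rescaling and is absorbed by the uniform continuity of the test function. Without this ``close encounter'' analysis the limit cannot be identified, since the dependence near coalescence is unavoidable; with it, your asymptotic-independence step can be repaired. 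Your inductive step also needs more care than ``routine via the strong Markov property'': after the first meeting the surviving walks start from random space--time points, so you would need a version of the hypothesis for random initial data, whereas the paper sidesteps this by conditioning on the joint convergence of the first $m$ walks and coupling only the $(m+1)$-st.
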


To prove Proposition \ref{prop:coalBM}, we first remark that the same proof of part III in the proof of Theorem 4 of \cite{cfd} holds in our case.
So it is enough to consider the case $s_0 = s_1 = ... = s_k = 0$.

\medskip

Let $(B^{x_0}_0(s),B^{x_1}_1(s),...,B^{x_m}_m(s))_{s \ge 0}$ be a vector of coalescing Brownian Motions starting at $(x_0,...,x_m)$ with diffusion coefficient $\sigma$. Under the assumption that $q$ is finite range, we show in this section that
\begin{equation}
\label{eq:convdist1}
\Big( \frac{X^{x_0 n}_{s n^{2}}}{n} , ... , \frac{X^{x_m n}_{s n^{2}}}{n} \Big)_{s \ge 0} \Longrightarrow^D (B^{x_0}_0(s), ... , B^{x_m}_m(s))_{s \ge 0} \, .
\end{equation}
We have that the convergence of $(X^{x n}_{s n^{2}} / n)_{s \ge 0}$ to a Brownian Motion starting at $x$ is a direct consequence of Donsker invariance principle. For the case $q(1)=1$, in the proof presented in \cite{cfd}, it is shown by induction that for $j=1,...,m$
\begin{equation*}
\label{eq:convdist2}
\Big( \frac{X^{x_k n}_{s n^{2}}}{n} \Big)_{s \ge 0} \, \Big| \, \Big( \frac{X^{x_0 n}_{s n^{2}}}{n}, ... , \frac{X^{x_{k-1} n}_{s n^{2}}}{n} \Big)_{s \ge 0} \Longrightarrow^D (B^{x_k}_k(s) )_{s \ge 0} \, | \, ( B^{x_0}_0(s) , ... ,B^{x_{k-1}}_{k-1}(s) )_{s \ge 0} \,.
\end{equation*}
It relies on the fact that no crossing can occur which allows the use 
of a natural order among trajectories of the random walks which we do not have. 

\medskip

We will take a slight different approach here. 
We make the proof in the case $x_0=0$, $x_1=1$, ... , $x_m = m$, 
the other cases can be carried out in an analogous way. 
To simplify the notation write
$$
\frac{X^{0}_{s n^{2}}}{n} = \Delta_n^0(s) \ , \ \  
\frac{X^{n}_{s n^{2}}}{n} = \Delta_n^1(s) \ , \ \ldots \ , \
\frac{X^{m n}_{s n^{2}}}{n} = \Delta_n^m(s) \, . 
$$
Let us fix a uniformly continuous bounded function $H:D([0,s])^{m+1} \ra \mathbb{R}$. We show that
\begin{eqnarray}
\label{I2}
\lim_{n \ra \oo} |\textrm{E}[H(\Delta^{0}_n, \, \ldots \, , \Delta^m_n)] - \textrm{E}[H(B^{0}_0, \, \ldots \, ,B^{m}_m)]| = 0 \, .
\end{eqnarray}
As mentioned above $\Delta^{0}_n$ converges in distribution to $B^{0}_0$.
Now, we suppose that $(\Delta^{0}_n, \, \ldots \, , \Delta^{m-1}_n)$ converges in distribution to 
$(B^{0}_0, \, \ldots \, ,B^{m-1}_{m-1})$ and we are going to show that (\ref{I2}) holds. By induction 
and the definition of convergence in distribution we obtain (\ref{eq:convdist1}).

\medskip

We start defining a modifications of the random walks $X^0$, ... ,$X^{(m-1)n}$, $X^{mn}$ in order that the modification of $X^{mn}$
is independent of the modifications of $X^0$, ... ,$X^{(m-1)n}$, until the time of coalescence with one of them. We achieve this through 
a coupling which is constructed using a suitable change of the environment.

So let $(\bar{\omega}^0(z,t))_{z \in \mathbb{Z}, t \in \mathbb{N}}$ and 
$(\bar{\omega}^1(z,t))_{z \in \mathbb{Z}, t \in \mathbb{N}}$ be two independent families of independent 
Bernoulli random variables with parameter $p \in (0,1)$ which are also independent of 
$(\omega(z,t))_{z \in \mathbb{Z}, t \in \mathbb{N}}$, $(\theta (z,t))_{z \in \mathbb{Z}, t \in \mathbb{N}}$ and 
$(\zeta (z,t))_{z \in \mathbb{Z}, t \in \mathbb{Z}_+}$. Considering the processes $Z^{0}$, ..., $Z^{(m-1)n}$ 
we define the environment $(\tilde{\omega}(z,t))_{z \in \mathbb{Z}, t \in \mathbb{N}}$ by
$$
\tilde{\omega}(z,t) = \left\{
\begin{array}{ll}
\omega(z,t) \, , & \! \textrm{if } \, t\ge 1 , \ z \le \max_{0\le k \le m-1} Z_{t-1}^{kn} + n^{\frac{3}{4}}, \\
\bar{\omega}^0(z,t) \, , & \! \textrm{otherwise}.
\end{array}
\right. 
$$
The processes $\tilde{Z}^{0}$, ..., $\tilde{Z}^{(m-1)n}$ are defined in the following way: For every $k=0,...,m-1$, $\tilde{Z}^{k n}$ starts at $k n$
and evolves in the same way as $Z^{kn}$ except that $\tilde{Z}^{kn}$ sees a different environment. Both $Z^{kn}$ and $\tilde{Z}^{kn}$ processes use the families of random variables $(\theta (z,t))_{z \in \mathbb{Z}, t \in \mathbb{N}}$ and $(\zeta (z,t))_{z \in \mathbb{Z}, t \in \mathbb{Z}_+}$, but $\tilde{Z}^{kn}$ jumps according to $(\tilde{\omega}(z,t))_{z \in \mathbb{Z}, t \in \mathbb{Z}_+}$ until the random time $t$ such that
$$
\max_{0\le k \le m-1} \tilde{Z}_{t}^{kn} \ge \max_{0\le k \le m-1} \tilde{Z}_{t-1}^{kn} + n^{\frac{3}{4}} \, ,
$$
after that time $\tilde{Z}^{kn}$ jumps according to $(\bar{\omega}^0(z,t))_{z \in \mathbb{Z}, t \in \mathbb{N}}$. From $\tilde{Z}^{kn}$ we define $\tilde{X}^{kn}$ and $\tilde{\Delta}^k_n$ as before.

Considering the process $Z^{mn}$ we define the environment 
$(\hat{\omega}(z,t))_{z \in \mathbb{Z}, t \in \mathbb{N}}$ by
$$
\hat{\omega}(z,t) = \left\{
\begin{array}{ll}
\omega(z,t) \, , & \! \textrm{if } \, t\ge 1 , \ Z_{t-1}^{mn} - n^{\frac{3}{4}} \le z \le Z_{t-1}^{mn} + n^{\frac{3}{4}}, \\
\bar{\omega}^1(z,t) \, , & \! \textrm{otherwise}.
\end{array}
\right. 
$$
The process $\hat{Z}^{mn}$ is defined in the following way: $\hat{Z}^{mn}$ starts at $m n$ and evolves in the same way as $Z^{mn}$ exept that $\hat{Z}^{mn}$ sees a different environment. Both $Z^{mn}$ and $\hat{Z}^{mn}$ use the families of random variables $(\theta (z,t))_{z \in \mathbb{Z}, t \in \mathbb{N}}$ and $(\zeta (z,t))_{z \in \mathbb{Z}, t \in \mathbb{Z}_+}$, but $\hat{Z}^{mn}$ jumps according to $(\tilde{\omega}(z,t))_{z \in \mathbb{Z}, t \in \mathbb{Z}_+}$ until the random time $t$ such that
$$
\hat{Z}^{mn} \neq  \big[ \hat{Z}_{t-1}^{mn} - n^{\frac{3}{4}} , \hat{Z}_{t-1}^{mn} + n^{\frac{3}{4}} \big] ,
$$
after that time $\hat{Z}^{mn}$ jumps according to $(\bar{\omega}^1(z,t))_{z \in \mathbb{Z}, t \in \mathbb{N}}$. From $\hat{Z}^{mn}$ we define $\hat{X}^{mn}$ and $\hat{\Delta}^m_n$ as before. 

Define the event
\begin{equation*}
\mathcal{B}_{n,s}  =  \bigcap_{k=0}^m 
\left\{ 
\begin{array}{l}
\Delta^k_n \textrm{ does not make a jump of size greater} \\
\textrm{than } n^{-\frac{1}{4}} \textrm{ in time interval } 0 \le t \le s
\end{array}
\right\} \, .
\end{equation*}
Note three facts:
\begin{itemize}
\item[$\cdot$] $\hat{Z}^{mn}$ is independent of $(\tilde{Z}^0, ...,\tilde{Z}^{(m-1)n})$ when conditioned to the event that $\tilde{Z}^{mn}$ do not get to a distance smaller than $2 n^{3/4}$ of some $\tilde{Z}^{jn}$, $0 \le j < m$. 
\item[$\cdot$] For every $k=0,...,m-1$, $Z^{kn}$ and $\tilde{Z}^{kn}$  are equal at least until the first jump by one of them of size greater than $n^{3/4}$, thus they are equal when retricted to the event $\mathcal{B}_{n,s}$. The same is true about $Z^{mn}$ and $\hat{Z}^{mn}$.
\item[$\cdot$] $(\tilde{\Delta}^{0}_n, \, \ldots \, , \tilde{\Delta}^{m-1}_n)$ and 
$(\Delta^{0}_n, \, \ldots \, , \Delta^{m-1}_n)$ have the same distribution.
\end{itemize} 
We have the following lemma:

\begin{lemma} If $q$ has finite third moment, then
$$
\textrm{P} ( \mathcal{B}^c_{n,s} ) \le C \, n^{-\frac{1}{4}} \, .
$$
\end{lemma}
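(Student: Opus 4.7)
The plan is to recognize that, by the very definition of $\Delta^k_n(t)=X^{kn}_{tn^2}/n$, a jump of the rescaled path $\Delta^k_n$ of size exceeding $n^{-1/4}$ in $[0,s]$ corresponds to an increment of the underlying walk of magnitude $|\xi^{kn}_j|=|Z^{kn}_j-Z^{kn}_{j-1}|>n^{3/4}$ for some $j\in\{1,\ldots,\lceil sn^2\rceil+1\}$. A union bound over the $m+1$ walks and over the at most $\lceil sn^2\rceil+1$ integer time steps in $[0,sn^2]$ therefore gives
$$
\textrm{P}(\mathcal{B}^c_{n,s}) \;\le\; (m+1)\,(\lceil sn^2\rceil +1)\;\textrm{P}\bigl(|\xi^0_1|>n^{3/4}\bigr),
$$
where I have used that the increments $\xi^{kn}_j$ are identically distributed (with the common law $\tilde q$ from Proposition \ref{prop:finmom}) for every $k$ and $j$; independence across $k$ is not needed, since the union bound only uses marginal probabilities.

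Next, I apply Markov's inequality at order three to the single random variable $\xi^0_1$:
$$
\textrm{P}\bigl(|\xi^0_1|>n^{3/4}\bigr) \;\le\; \frac{\textrm{E}[|\xi^0_1|^3]}{n^{9/4}}.
$$
By Proposition \ref{prop:finmom} applied with $m=3$, the hypothesis that $q(\cdot)$ has finite third moment guarantees that $\textrm{E}[|\xi^0_1|^3]<\infty$. Plugging this bound into the display above yields
$$
\textrm{P}(\mathcal{B}^c_{n,s}) \;\le\; (m+1)\,(\lceil sn^2\rceil +1)\,\frac{\textrm{E}[|\xi^0_1|^3]}{n^{9/4}} \;\le\; C\,n^{-1/4},
$$
for a constant $C=C(m,s,q,p)$ independent of $n$, which is the desired estimate.

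There is no genuine obstacle here: the only ingredients are a union bound, Markov's inequality, and the third moment control supplied by Proposition \ref{prop:finmom}. The exponent $3/4$ in the definition of $\mathcal{B}_{n,s}$ and the exponent $3$ in the moment hypothesis are matched precisely so that the product $(sn^2)\cdot n^{-9/4}=O(n^{-1/4})$ comes out; this is the reason the lemma is stated under the finite third moment assumption.
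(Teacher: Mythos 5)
Your proof is correct and follows essentially the same route as the paper: a union bound over the $m+1$ walks and the $O(sn^2)$ time steps, followed by Markov's inequality at order three applied to the increment $|Z^0_1-Z^0_0|$, whose finite third moment is supplied by Proposition \ref{prop:finmom}. Nothing more to add.
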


\begin{proof} By definition, we have
$$
\textrm{P} ( \mathcal{B}^c_{n,s} ) \le ( m + 1 ) \, \textrm{P} \big( (Z^0_j)_{j=1}^{sn^2} \textrm{makes a jump of size greater than } n^{3/4} \big) \, .
$$
It is simple to see that the probability in the right hand side is bounded above by
$$ 
s \, n^2 \textrm{P} \big( |Z^0_1 - Z^0_0| \ge n^{3/4} \big)
\le s \, n^2 \, n^{-\frac{9}{4}} \, \textrm{E}[|Z^0_1 - Z^0_0|^3] \le C \, n^{-\frac{1}{4}} \, .
$$
\end{proof}

We still need to replace the process $\hat{Z}^{mn}$ by a process $\bar{Z}^{mn}$ in such a way that $\hat{Z}^{mn}$ is independent of 
$(\tilde{Z}^0,Z^n,...,\tilde{Z}^{(m-1)n})$ until the time $\bar{Z}^{mn}$ coalesce with one of them. In order to do this we define 
$\bar{Z}^{mn}$ in the following way: Let 
$$
\nu = \inf\big\{1\le j \le s \, n^2 : \, | \tilde{Z}^{k n}_j - \hat{Z}^{m n}_j| \le n^{\frac{3}{4}} \textrm{ for some } 0\le k \le m-1\big\},
$$
then $\bar{Z}^{m n}_j = \hat{Z}^{m n}_j$, for $0\le j < \nu$, otherwise $\bar{Z}^{m n}$ jumps according to $(\bar{\omega}^1(z,t))_{z \in \mathbb{Z}, t \in \mathbb{N}}$ or coalesce with one of $\tilde{Z}^0$, ..., $\tilde{Z}^{(m-1)n}$, if they meet each other at an integer time. 
From $\bar{Z}^{m n}$ we define $\bar{X}^{m n}$ and $\bar{\Delta}^m_n$ as before. Define the event
$$
\mathcal{A}_{n,s} = \Big\{ \inf_{0\le k < m} \inf_{0\le t \le s} \| \tilde{\Delta}^{k}_n(t) - \hat{\Delta}^{m}_n(t) \| \ge 2 \, n^{-\frac{1}{4}} \Big\}  
$$
and note two facts: 
\begin{itemize}
\item[$\cdot$] $\bar{Z}^{mn}$ is independent of $(\tilde{Z}^0,...,\tilde{Z}^{(m-1)n})$. 
\item[$\cdot$] $\hat{Z}^{m n}$ and $\bar{Z}^{m n}$ are equal when retricted to the event $\mathcal{A}_{n,s}$.
\end{itemize}

\medskip
We are now ready to prove (\ref{I2}). Write
\begin{eqnarray}
\label{I1}
\lefteqn{|\textrm{E}[H(\Delta^{0}_n, \, \ldots \, , \Delta^m_n)] - \textrm{E}[H(B^{0}_0, \, \ldots \, ,B^{m}_m)]|  \le } \nn \\
& & \qquad |\textrm{E}[H(\Delta^{0}_n, \, \ldots \, , \Delta^m_n)] - \textrm{E}[H(\tilde{\Delta}^{0}_n, \, \ldots \, , \tilde{\Delta}^{m-1}_n, \hat{\Delta}^m_n)]| \nn \\
& & \qquad \quad + |\textrm{E}[H(\tilde{\Delta}^{0}_n, \, \ldots \, , \tilde{\Delta}^{m-1}_n, \hat{\Delta}^m_n)] - \textrm{E}[H(\tilde{\Delta}^{0}_n, \, \ldots \, , \tilde{\Delta}^{m-1}_n, \bar{\Delta}^m_n)]| \\
& & \qquad \quad \quad + |\textrm{E}[H(\tilde{\Delta}^{0}_n, \, \ldots \, , \tilde{\Delta}^{m-1}_n, \bar{\Delta}^m_n)] - \textrm{E}[H(B^{0}_0, \, \ldots \, ,B^{m}_m)]|. \nn
\end{eqnarray}
By Donsker's Invariance Principle and the induction hypothesis, we have that
$$
\lim_{n\ra \infty} |\textrm{E}[H(\tilde{\Delta}^{0}_n, \, \ldots \, , \tilde{\Delta}^{m-1}_n, \bar{\Delta}^m_n)] - \textrm{E}[H(B^{0}_0, \, \ldots \, ,B^{m}_m)]| = 0 \, .
$$
So we only have to deal with the first and second term in (\ref{I1}). For the first term in (\ref{I1}) we have that
\begin{eqnarray*}
\lefteqn{|\textrm{E}[H(\Delta^{0}_n, \, \ldots \, , \Delta^m_n)] - \textrm{E}[H(\tilde{\Delta}^{0}_n, \, \ldots \, , \tilde{\Delta}^{m-1}_n, \hat{\Delta}^m_n)]| \le } \\
& & \qquad  = |\textrm{E}[ ( H(\Delta^{0}_n, \, \ldots \, , \Delta^m_n) - H(\tilde{\Delta}^{0}_n, \, \ldots \, , \tilde{\Delta}^{m-1}_n, \hat{\Delta}^m_n) ) \mathbb{I}_{\mathcal{B}^c_{n,s}}]| \\
& & \qquad \le C \|H\|_\oo \, n^{-\frac{1}{4}} \, . 
\end{eqnarray*}
Hence
$$
\lim_{n\ra \infty} |\textrm{E}[H(\Delta^{0}_n, \, \ldots \, , \Delta^m_n)] - \textrm{E}[H(\tilde{\Delta}^{0}_n, \, \ldots \, , \tilde{\Delta}^{m-1}_n, \hat{\Delta}^m_n)]| = 0 \, .
$$
It remains to prove that the second term in (\ref{I1}) converges to zero as $n$ goes to $+\oo$. Note that
\begin{eqnarray*}
\lefteqn{|\textrm{E}[H(\tilde{\Delta}^{0}_n, \, \ldots \, , \tilde{\Delta}^{m-1}_n, \hat{\Delta}^m_n)] - \textrm{E}[H(\tilde{\Delta}^{0}_n, \, \ldots \, , \tilde{\Delta}^{m-1}_n, \bar{\Delta}^m_n)]| = } \\
& & \qquad = |\textrm{E}[H(\tilde{\Delta}^{0}_n, \, \ldots \, , \tilde{\Delta}^{m-1}_n, \hat{\Delta}^m_n) - H(\tilde{\Delta}^{0}_n, \, \ldots \, , \tilde{\Delta}^{m-1}_n, \bar{\Delta}^m_n)]| \\
& & \qquad = |\textrm{E}[ ( H(\tilde{\Delta}^{0}_n, \, \ldots \, , \tilde{\Delta}^{m-1}_n, \hat{\Delta}^m_n) - H(\tilde{\Delta}^{0}_n, \, \ldots \, , \tilde{\Delta}^{m-1}_n, \bar{\Delta}^m_n) ) \mathbb{I}_{\mathcal{A}^c_{n,s}} ]| 
\end{eqnarray*}
The rightmost expression in the previous equality is bounded above by $C \|H\|_\oo \, n^{-\frac{1}{4}}$ plus
$$
|\textrm{E}[ ( H(\tilde{\Delta}^{0}_n, \, \ldots \, , \tilde{\Delta}^{m-1}_n, \hat{\Delta}^m_n) - H(\tilde{\Delta}^{0}_n, \, \ldots \, , \tilde{\Delta}^{m-1}_n, \bar{\Delta}^m_n) ) \mathbb{I}_{\mathcal{A}^c_{n,s} \cap \mathcal{B}_{n,s}} ]|
$$
which is equal to
\begin{equation}
\label{I4}
|\textrm{E}[ ( H(\Delta^{0}_n, \, \ldots \, , \Delta^m_n) - H(\Delta^{0}_n, \, \ldots \, , \Delta^{m-1}_n, \bar{\Delta}^m_n) ) \mathbb{I}_{\mathcal{A}^c_{n,s} \cap \mathcal{B}_{n,s}} ]| .
\end{equation}
To deal with the last expectation, define the coalescing times
$$
\tau_k = \inf \{ j \ge 1 : Z^{kn}_j = Z^{mn}_j \} \quad \textrm{and} \quad 
\bar{\tau}_k = \inf \{ j \ge 1 : Z^{kn}_j = \hat{Z}^{mn}_j \} \, ,
$$
for every $k \in {0,...,m-1}$.
The times $\tau_k$ and $\bar{\tau}_k$ have the tail of their distributions of $O(t^{-\frac{1}{2}})$. Also define
$$
\nu_k = \inf \big\{ 1\le j \le s \, n^2 : \, | \tilde{Z}^{l n}_j - \tilde{Z}^{k n}_j| \le n^{\frac{3}{4}} \big\}.
$$
Note that on $\mathcal{A}^c_{n,s}$ we have $\nu_k = \nu \le s \, n^2$, for some $k \in \{ 0, ... , m -1 \}$. Furthermore, on $\mathcal{B}_{n,s}$ and up to time $\nu$ , we have $(\Delta^{0}_n, \, \ldots \, , \Delta^m_n)$ equal to $(\Delta^{0}_n, \, \ldots \, , \Delta^{m-1}_n, \bar{\Delta}^m_n)$. We have that
$$
\textrm{P} \Big( \Big\{ \sup_{0\le t \le s} |\Delta^m_n(t) - \bar{\Delta}^m_n(t)| \ge \log(n) n^{-\frac{1}{8}} \Big\} \cap \mathcal{A}^c_{n,s} \cap \mathcal{B}_{n,s} \Big) 
$$
is equal to
\begin{eqnarray*}
\lefteqn{\textrm{P} \Big( \Big\{ \sup_{0\le j \le s \, n^2} |Z^{m n}_j - \bar{Z}^{m n}_j| \ge \log(n) n^{\frac{7}{8}} \Big\} \cap \mathcal{A}^c_{n,s} \cap \mathcal{B}_{n,s} \Big) = } \\
& & = \textrm{P} \Big( \Big\{ \sup_{\nu \le j \le s \, n^2} |Z^{m n}_j - \bar{Z}^{m n}_j| \ge \log(n) n^{\frac{7}{8}} \Big\} \cap \mathcal{A}^c_{n,s} \cap \mathcal{B}_{n,s} \Big) \\
& & \le \sum_{k=0}^{m-1} \textrm{P} \Big( \Big\{ \sup_{\nu_k \le j \le s \, n^2} |Z^{m n}_j - \bar{Z}^{m n}_j| \ge \log(n) n^{\frac{7}{8}} \Big\} \cap \mathcal{A}^c_{n,s} \cap \mathcal{B}_{n,s} \cap \{ \nu = \nu_k \} \Big) \, .
\end{eqnarray*}
For each $k \in \{0,..., m-1\}$, the respective term in the previous sum is bounded above by
\begin{eqnarray}
\label{I3}
& & \textrm{P} \Big( \Big\{ \sup_{\nu \le j \le s \, n^2} |Z^{mn}_j - \bar{Z}^{mn}_j| \ge \log(n) n^{\frac{7}{8}} \Big\} \cap \mathcal{A}^c_{n,s} \cap \mathcal{B}_{n,s} \cap \{\tau, \, \bar{\tau} \in [\nu , \nu + n^{\frac{7}{4}}] \} \Big) \nn \\
& & \textrm{P} \Big( \{ \tau_k > \nu_k + n^{\frac{7}{4}} \} \cup \{ \bar{\tau}_k > \nu_k + n^{\frac{7}{4}} \} \Big) .
\end{eqnarray}
The second term in (\ref{I3}) is bounded above by $2 \,\frac{n^{\frac{3}{4}}}{n^{\frac{7}{8}}} = 2 \, n^{-\frac{1}{8}}$ and the first by
\begin{eqnarray*}
& & \textrm{P} \Big( \Big\{ \sup_{\nu \le j \le (\nu + n^{\frac{7}{4}}) \wedge s \, n^2 } |Z^{mn}_j - \bar{Z}^{mn}_j| \ge \log(n) n^{\frac{7}{8}} \Big\} \cap \mathcal{A}^c_{n,s} \cap \mathcal{B}_{n,s} \Big) \\
& & \quad \le 2 \textrm{P} \Big( \sup_{0 \le j \le (\nu + n^{\frac{7}{4}})} \frac{|Z^0_j|}{n^{\frac{7}{8}}} \ge \frac{\log(n)}{2} \Big) \, ,
\end{eqnarray*}
which by Donsker invariance principle goes to zero as $n$ goes to infinity. Finally we have that (\ref{I4}) is bounded above by a term that converges to zero as $n \ra \infty$ plus
$$
\Big| \textrm{E} \Big[ \big( H(\Delta^{0}_n, \, \ldots \, , \Delta^m_n) - H(\Delta^{0}_n, \, \ldots \, , \Delta^{m-1}_n, \bar{\Delta}^m_n) \big) \mathbb{I}_{ \mathcal{E}_{n,s} } \Big] \Big| \, ,
$$
where 
$$
\mathcal{E}_{n,s} =
\Big\{ \sup_{0\le t \le s} |\Delta^m_n(t) - \bar{\Delta}^m_n(t)| \le \log(n) n^{-\frac{1}{8}} \Big\} \cap \mathcal{A}^c_{n,s} \cap \mathcal{B}_{n,s}
$$
By the uniform continuity of $H$ the rightmost expectation in the previous expression converges to zero as $n$ goes to $+\infty$.
\bigskip

\end{document}